\documentclass[12pt]{amsart}       
\usepackage{txfonts}
\usepackage{amssymb}
\usepackage{eucal}
\usepackage{graphicx}
\usepackage{amsmath}
\usepackage{amscd}
\usepackage[all]{xy}           
\usepackage{tikz}
\usepackage{amsfonts,latexsym}
\usepackage{xspace}
\usepackage{epsfig}
\usepackage{float}
\usepackage{axodraw}
\usepackage{color}
\usepackage{fancybox}
\usepackage{colordvi}
\usepackage{multicol}
\usepackage{eucal}

\usepackage[all]{xy}           

\usepackage{xspace}

\usepackage[colorlinks,final,backref=page,hyperindex,hypertex]{hyperref}
\usepackage[active]{srcltx} 


\topmargin -.8cm \textheight 22.8cm \oddsidemargin 0cm \evensidemargin -0cm \textwidth 16.3cm



\newtheorem{theorem}{Theorem}[section]
\newtheorem{prop}[theorem]{Proposition}
\theoremstyle{definition}
\newtheorem{defn}[theorem]{Definition}
\newtheorem{lemma}[theorem]{Lemma}

\newtheorem{prop-def}{Proposition-Definition}[section]
\newtheorem{coro-def}{Corollary-Definition}[section]

\newtheorem{exam}{Example}[section]


\newcommand{\nc}{\newcommand}
\nc{\tred}[1]{\textcolor{red}{#1}}
\nc{\tblue}[1]{\textcolor{blue}{#1}}
\nc{\tgreen}[1]{\textcolor{green}{#1}}
\nc{\tpurple}[1]{\textcolor{purple}{#1}}
\nc{\btred}[1]{\textcolor{red}{\bf #1}}
\nc{\btblue}[1]{\textcolor{blue}{\bf #1}}
\nc{\btgreen}[1]{\textcolor{green}{\bf #1}}
\nc{\btpurple}[1]{\textcolor{purple}{\bf #1}}
\nc{\NN}{{\mathbb N}}
\nc{\ncsha}{{\mbox{\cyr X}^{\mathrm NC}}} \nc{\ncshao}{{\mbox{\cyr
X}^{\mathrm NC}_0}}

\renewcommand{\frak}{\mathfrak}

\newcommand{\efootnote}[1]{}

\renewcommand{\textbf}[1]{}

\newcommand{\delete}[1]{}

\nc{\mlabel}[1]{\label{#1}}  
\nc{\mcite}[1]{\cite{#1}}  
\nc{\mref}[1]{\ref{#1}}  
\nc{\mbibitem}[1]{\bibitem{#1}} 

\delete{
\nc{\mlabel}[1]{\label{#1}  
{\hfill \hspace{1cm}{\small\tt{{\ }\hfill(#1)}}}}
\nc{\mcite}[1]{\cite{#1}{\small{\tt{{\ }(#1)}}}}  
\nc{\mref}[1]{\ref{#1}{{\tt{{\ }(#1)}}}}  
\nc{\mbibitem}[1]{\bibitem[\bf #1]{#1}} 
}



\nc{\vecs}{\vec{s}}
\nc{\gen}[1]{{\langle {#1}\rangle}}
\nc{\rsubtree}{real subtree\xspace}
\nc{\lsubtree}{letter subtree\xspace}
\nc{\rsubtrees}{real subtrees\xspace}
\nc{\lsubtrees}{letter subtrees\xspace}
\nc{\closure}{closure\xspace}
\nc{\closures}{closures\xspace}
\nc{\cl}{\mathrm{cl}}

\nc{\vsubtree}{virtual subtree\xspace}
\nc{\vsubtrees}{virtual subtrees\xspace}
\nc{\vsubforest}{virtual subforest\xspace}
\nc{\vsubforests}{virtual subforests\xspace}

\nc{\mnoindent}{\smallskip \noindent}

\nc{\mo}{m}
\nc{\barot}{\overline{\otimes}}

\nc{\opa}{\ast} \nc{\opb}{\odot} \nc{\op}{\bullet} \nc{\pa}{\frakL}
\nc{\arr}{\rightarrow} \nc{\lu}[1]{(#1)} \nc{\mult}{\mrm{mult}}
\nc{\diff}{\mathfrak{Diff}}
\nc{\opc}{\sharp}\nc{\opd}{\natural}
\nc{\ope}{\circ}
\nc{\dpt}{\mathrm{d}}
\nc{\diam}{alternating\xspace}
\nc{\Diam}{Alternating\xspace}
\nc{\cdiam}{canonical alternating\xspace}
\nc{\Cdiam}{Canonical alternating\xspace}
\nc{\AW}{\mathcal{A}}
\nc{\mrbo}{modified RBO\xspace }
\nc{\ari}{\mathrm{ar}}

\nc{\lef}{\mathrm{lef}}

\nc{\Sh}{\mathrm{ST}}

\nc{\Cr}{\mathrm{Cr}}

\nc{\st}{{Schr\"oder tree}\xspace}
\nc{\sts}{{Schr\"oder trees}\xspace}

\nc{\vertset}{\Omega} 

\nc{\pb}{{\mathrm{pb}}}
\nc{\Lf}{{\mathrm{Lf}}}

\nc{\lft}{{left tree}\xspace}
\nc{\lfts}{{left trees}\xspace}

\nc{\fat}{{fundamental averaging tree}\xspace}

\nc{\fats}{{fundamental averaging trees}\xspace}
\nc{\avt}{\mathrm{Avt}}

\nc{\rass}{{\mathit{RAss}}}

\nc{\aass}{{\mathit{AAss}}}

\nc{\vin}{{\mathrm Vin}}    
\nc{\lin}{{\mathrm Lin}}    
\nc{\inv}{\mathrm{I}n}
\nc{\gensp}{V} 
\nc{\genbas}{\mathcal{V}} 
\nc{\bvp}{V_P}     
\nc{\gop}{{\,\omega\,}}     

\nc{\bin}[2]{ (_{\stackrel{\scs{#1}}{\scs{#2}}})}  
\nc{\binc}[2]{ \left (\!\! \begin{array}{c} \scs{#1}\\
    \scs{#2} \end{array}\!\! \right )}  
\nc{\bincc}[2]{  \left ( {\scs{#1} \atop
    \vspace{-1cm}\scs{#2}} \right )}  
\nc{\bs}{\bar{S}} \nc{\cosum}{\sqsubset} \nc{\la}{\longrightarrow}
\nc{\rar}{\rightarrow} \nc{\dar}{\downarrow} \nc{\dprod}{**}
\nc{\dap}[1]{\downarrow \rlap{$\scriptstyle{#1}$}}
\nc{\md}{\mathrm{dth}} \nc{\uap}[1]{\uparrow
\rlap{$\scriptstyle{#1}$}} \nc{\defeq}{\stackrel{\rm def}{=}}
\nc{\disp}[1]{\displaystyle{#1}} \nc{\dotcup}{\
\displaystyle{\bigcup^\bullet}\ } \nc{\gzeta}{\bar{\zeta}}
\nc{\hcm}{\ \hat{,}\ } \nc{\hts}{\hat{\otimes}}
\nc{\free}[1]{\bar{#1}}
\nc{\uni}[1]{\tilde{#1}} \nc{\hcirc}{\hat{\circ}} \nc{\lleft}{[}
\nc{\lright}{]} \nc{\lc}{\lfloor} \nc{\rc}{\rfloor}
\nc{\curlyl}{\left \{ \begin{array}{c} {} \\ {} \end{array}
    \right .  \!\!\!\!\!\!\!}
\nc{\curlyr}{ \!\!\!\!\!\!\!
    \left . \begin{array}{c} {} \\ {} \end{array}
    \right \} }
\nc{\longmid}{\left | \begin{array}{c} {} \\ {} \end{array}
    \right . \!\!\!\!\!\!\!}
\nc{\onetree}{\bullet} \nc{\ora}[1]{\stackrel{#1}{\rar}}
\nc{\ola}[1]{\stackrel{#1}{\la}}
\nc{\ot}{\otimes} \nc{\mot}{{{\boxtimes\,}}}
\nc{\otm}{\overline{\boxtimes}} \nc{\sprod}{\bullet}
\nc{\scs}[1]{\scriptstyle{#1}} \nc{\mrm}[1]{{\rm #1}}
\nc{\margin}[1]{\marginpar{\rm #1}}   
\nc{\dirlim}{\displaystyle{\lim_{\longrightarrow}}\,}
\nc{\invlim}{\displaystyle{\lim_{\longleftarrow}}\,}
\nc{\mvp}{\vspace{0.3cm}} \nc{\tk}{^{(k)}} \nc{\tp}{^\prime}
\nc{\ttp}{^{\prime\prime}} \nc{\svp}{\vspace{2cm}}
\nc{\vp}{\vspace{8cm}} \nc{\proofbegin}{\noindent{\bf Proof: }}
\nc{\proofend}{$\blacksquare$ \vspace{0.3cm}}
\nc{\modg}[1]{\!<\!\!{#1}\!\!>}
\nc{\intg}[1]{F_C(#1)} \nc{\lmodg}{\!
<\!\!} \nc{\rmodg}{\!\!>\!}
\nc{\cpi}{\widehat{\Pi}}
\nc{\sha}{{\mbox{\cyr X}}}  
\nc{\ssha}{{\mbox{\cyrs X}}} 
\nc{\shpr}{\diamond}    
\nc{\shp}{\ast} \nc{\shplus}{\shpr^+}
\nc{\shprc}{\shpr_c}    
\nc{\msh}{\ast} \nc{\zprod}{m_0} \nc{\oprod}{m_1}
\nc{\vep}{\varepsilon} \nc{\labs}{\mid\!} \nc{\rabs}{\!\mid}
\nc{\sqmon}[1]{\langle #1\rangle}

\nc{\mmbox}[1]{\mbox{\ #1\ }} \nc{\dep}{\mrm{dep}}
\nc{\bre}{\mrm{bre}}
\nc{\fp}{\mrm{FP}}
\nc{\rchar}{\mrm{char}} \nc{\End}{\mrm{End}} \nc{\Fil}{\mrm{Fil}}
\nc{\Mor}{Mor\xspace} \nc{\gmzvs}{gMZV\xspace}
\nc{\gmzv}{gMZV\xspace} \nc{\mzv}{MZV\xspace}
\nc{\mzvs}{MZVs\xspace} \nc{\Hom}{\mrm{Hom}} \nc{\id}{\mrm{id}}
\nc{\im}{\mrm{im}} \nc{\incl}{\mrm{incl}} \nc{\map}{\mrm{Map}}
\nc{\mchar}{\rm char} \nc{\nz}{\rm NZ} \nc{\supp}{\mathrm Supp}

\nc{\Alg}{\mathbf{Alg}} \nc{\Bax}{\mathbf{Bax}} \nc{\bff}{\mathbf f}
\nc{\bfk}{{\bf k}} \nc{\bfone}{{\bf 1}} \nc{\bfx}{\mathbf x}
\nc{\bfy}{\mathbf y}
\nc{\base}[1]{\bfone^{\otimes ({#1}+1)}} 
\nc{\Cat}{\mathbf{Cat}}

\nc{\detail}{\marginpar{\bf More detail}
    \noindent{\bf Need more detail!}
    \svp}
\nc{\Int}{\mathbf{Int}} \nc{\Mon}{\mathbf{Mon}}
\nc{\rbtm}{{shuffle }} \nc{\rbto}{{Rota-Baxter }}
\nc{\remarks}{\noindent{\bf Remarks: }} \nc{\Rings}{\mathbf{Rings}}
\nc{\Sets}{\mathbf{Sets}} \nc{\wtot}{\widetilde{\odot}}
\nc{\wast}{\widetilde{\ast}} \nc{\bodot}{\bar{\odot}}
\nc{\bast}{\bar{\ast}} \nc{\hodot}[1]{\odot^{#1}}
\nc{\hast}[1]{\ast^{#1}} \nc{\mal}{\mathcal{O}}
\nc{\tet}{\tilde{\ast}} \nc{\teot}{\tilde{\odot}}
\nc{\oex}{\overline{x}} \nc{\oey}{\overline{y}}
\nc{\oez}{\overline{z}} \nc{\oef}{\overline{f}}
\nc{\oea}{\overline{a}} \nc{\oeb}{\overline{b}}
\nc{\weast}[1]{\widetilde{\ast}^{#1}}
\nc{\weodot}[1]{\widetilde{\odot}^{#1}} \nc{\hstar}[1]{\star^{#1}}
\nc{\lae}{\langle} \nc{\rae}{\rangle}
\nc{\lf}{\lfloor}
\nc{\rf}{\rfloor}


\nc{\QQ}{{\mathbb Q}}
\nc{\RR}{{\mathbb R}} \nc{\ZZ}{{\mathbb Z}}


\nc{\cala}{{\mathcal A}} \nc{\calb}{{\mathcal B}}
\nc{\calc}{{\mathcal C}}
\nc{\cald}{{\mathcal D}} \nc{\cale}{{\mathcal E}}
\nc{\calf}{{\mathcal F}} \nc{\calg}{{\mathcal G}}
\nc{\calh}{{\mathcal H}} \nc{\cali}{{\mathcal I}}
\nc{\call}{{\mathcal L}} \nc{\calm}{{\mathcal M}}
\nc{\caln}{{\mathcal N}} \nc{\calo}{{\mathcal O}}
\nc{\calp}{{\mathcal P}} \nc{\calr}{{\mathcal R}}
\nc{\cals}{{\mathcal S}} \nc{\calt}{{\mathcal T}}
\nc{\calu}{{\mathcal U}} \nc{\calw}{{\mathcal W}} \nc{\calk}{{\mathcal K}}
\nc{\calx}{{\mathcal X}} \nc{\CA}{\mathcal{A}}

\nc{\fraka}{{\mathfrak a}} \nc{\frakA}{{\mathfrak A}}
\nc{\frakb}{{\mathfrak b}} \nc{\frakB}{{\mathfrak B}}
\nc{\frakD}{{\mathfrak D}} \nc{\frakF}{\mathfrak{F}}
\nc{\frakf}{{\mathfrak f}} \nc{\frakg}{{\mathfrak g}}
\nc{\frakH}{{\mathfrak H}} \nc{\frakL}{{\mathfrak L}}
\nc{\frakM}{{\mathfrak M}} \nc{\bfrakM}{\overline{\frakM}}
\nc{\frakm}{{\mathfrak m}} \nc{\frakP}{{\mathfrak P}}
\nc{\frakN}{{\mathfrak N}} \nc{\frakp}{{\mathfrak p}}
\nc{\frakS}{{\mathfrak S}} \nc{\frakT}{\mathfrak{T}}
\nc{\frakx}{\mathfrak{x}}  \nc{\frakX}{{\mathfrak X}}

\nc{\BS}{\mathbb{S
}}

\font\cyr=wncyr10 \font\cyrs=wncyr7
\nc{\li}[1]{\textcolor{red}{#1}}
\nc{\lir}[1]{\textcolor{red}{Li:#1}}

\nc{\xigou}[1]{\textcolor{blue}{Xigou: #1}}

\nc{\UN}{U_{N}}
\nc{\FN}{F_{\mathrm M}}
\nc{\altx}{\Lambda}
\nc{\spr}{\cdot}
\nc{\rts}{\stackrel{\rightarrow}{\shpr}}
\nc{\ox}{\overline{\frak x}}
\nc{\oX}{\overline{X}}
\nc{\dia}{\diamond_a}
\nc{\dirr}{\diamond_R}
\nc{\lam}{\lambda}
\nc{\ovf}{\overline}
\nc{\sj}{\Delta_a}
\nc{\eb}{\epsilon_a}
\nc{\iu}{\iota}
\nc{\pq}{\preceq}
\nc{\di}{\bullet}
\nc{\wi}{\widetilde{\Delta_a}}

\def\ta1{{\scalebox{0.25}{ 
\begin{picture}(12,12)(38,-38)
\SetWidth{0.5}  \Vertex(45,-33){5.66}
\end{picture}}}}

\def\tb2{{\scalebox{0.25}{ 
\begin{picture}(12,42)(38,-38)
\SetWidth{0.5}  \Vertex(45,-3){5.66}
\SetWidth{1.0} \Line(45,-3)(45,-33) \SetWidth{0.5}
\Vertex(45,-33){5.66}
\end{picture}}}\,}

\def\tc3{{\scalebox{0.25}{ 
\begin{picture}(12,72)(38,-38)
\SetWidth{0.5}  \Vertex(45,27){5.66}
\SetWidth{1.0} \Line(45,27)(45,-3) \SetWidth{0.5}
\Vertex(45,-33){5.66} \SetWidth{1.0} \Line(45,-3)(45,-33)
\SetWidth{0.5} \Vertex(45,-3){5.66}
\end{picture}}}}

\def\td31{{\scalebox{0.25}{ 
\begin{picture}(42,42)(23,-38)
\SetWidth{0.5}  \Vertex(45,-3){5.66}
\Vertex(30,-33){5.66} \Vertex(60,-33){5.66} \SetWidth{1.0}
\Line(45,-3)(30,-33) \Line(60,-33)(45,-3)
\end{picture}}}}

\def\xtd31{{\scalebox{0.35}{ 
\begin{picture}(70,42)(13,-35)
\SetWidth{0.5}  \Vertex(45,-3){5.66}
\Vertex(30,-33){5.66} \Vertex(60,-33){5.66} \SetWidth{1.0}
\Line(45,-3)(30,-33) \Line(60,-33)(45,-3)
\put(38,-38){\em \huge x}
\end{picture}}}}

\def\x2td31{{\scalebox{0.35}{ 
\begin{picture}(70,42)(13,-35)
\SetWidth{0.5}  \Vertex(45,-3){5.66}
\Vertex(28,-33){5.66} \Vertex(62,-33){5.66} \SetWidth{1.0}
\Line(45,-3)(28,-33) \Line(62,-33)(45,-3)
\put(35,-38){\em \huge $x_2$}
\end{picture}}}}

\def\ytd31{{\scalebox{0.35}{ 
\begin{picture}(70,42)(13,-35)
\SetWidth{0.5}  \Vertex(45,-3){5.66}
\Vertex(30,-33){5.66} \Vertex(60,-33){5.66} \SetWidth{1.0}
\Line(45,-3)(30,-33) \Line(60,-33)(45,-3)
\put(38,-38){\em \huge y}
\end{picture}}}}

\def\xldec41r{{\scalebox{0.35}{ 
\begin{picture}(70,42)(13,-45)

\SetWidth{0.5} \Vertex(45,-3){5.66}
\Vertex(30,-33){5.66} \Vertex(60,-33){5.66}
\Vertex(60,-63){5.66}
\SetWidth{1.0}
\Line(45,-3)(30,-33) \Line(60,-33)(45,-3)
\Line(60,-33)(60,-63)
\put(38,-38){\em \huge x}

\end{picture}}}}

\def\xyldec43{{\scalebox{0.35}{ 
\begin{picture}(70,62)(13,-25)

\SetWidth{0.5} \Vertex(45,-3){5.66}
\Vertex(15,-33){5.66} \Vertex(45,-38){5.66}
\Vertex(75,-33){5.66}
\SetWidth{1.0}
\Line(45,-3)(15,-33) \Line(45,-3)(45,-38)
\Line(45,-3)(74,-33)
\put(25,-33){\em\huge x}
\put(50,-33){\em\huge y}
\end{picture}}}}

\def\te4{{\scalebox{0.25}{ 
\begin{picture}(12,102)(38,-8)
\SetWidth{0.5}  \Vertex(45,57){5.66}
\Vertex(45,-3){5.66} \Vertex(45,27){5.66} \Vertex(45,87){5.66}
\SetWidth{1.0} \Line(45,57)(45,27) \Line(45,-3)(45,27)
\Line(45,57)(45,87)
\end{picture}}}}

\def\tf41{{\scalebox{0.25}{ 
\begin{picture}(42,72)(38,-8)
\SetWidth{0.5}  \Vertex(45,27){5.66}
\Vertex(45,-3){5.66} \SetWidth{1.0} \Line(45,27)(45,-3)
\SetWidth{0.5} \Vertex(60,57){5.66} \SetWidth{1.0}
\Line(45,27)(60,57) \SetWidth{0.5} \Vertex(75,27){5.66}
\SetWidth{1.0} \Line(75,27)(60,57)
\end{picture}}}}

\def\tg42{{\scalebox{0.25}{ 
\begin{picture}(42,72)(8,-8)
\SetWidth{0.5}  \Vertex(45,27){5.66}
\Vertex(45,-3){5.66} \SetWidth{1.0} \Line(45,27)(45,-3)
\SetWidth{0.5} \Vertex(15,27){5.66} \Vertex(30,57){5.66}
\SetWidth{1.0} \Line(15,27)(30,57) \Line(45,27)(30,57)
\end{picture}}}}

\def\th43{{\scalebox{0.25}{ 
\begin{picture}(42,42)(8,-8)
\SetWidth{0.5}  \Vertex(45,-3){5.66}
\Vertex(15,-3){5.66} \Vertex(30,27){5.66} \SetWidth{1.0}
\Line(15,-3)(30,27) \Line(45,-3)(30,27) \Line(30,27)(30,-3)
\SetWidth{0.5} \Vertex(30,-3){5.66}
\end{picture}}}}

\def\thII43{{\scalebox{0.25}{ 
\begin{picture}(72,57) (68,-128)
    \SetWidth{0.5}

    \Vertex(105,-78){5.66}
    \SetWidth{1.5}
    \Line(105,-78)(75,-123)
    \Line(105,-78)(105,-123)
    \Line(105,-78)(135,-123)
    \SetWidth{0.5}
    \Vertex(75,-123){5.66}
    \Vertex(105,-123){5.66}
    \Vertex(135,-123){5.66}
  \end{picture}
  }}}

\def\thj44{{\scalebox{0.25}{ 
\begin{picture}(42,72)(8,-8)
\SetWidth{0.5}  \Vertex(30,57){5.66}
\SetWidth{1.0} \Line(30,57)(30,27) \SetWidth{0.5}
\Vertex(30,27){5.66} \SetWidth{1.0} \Line(45,-3)(30,27)
\SetWidth{0.5} \Vertex(45,-3){5.66} \Vertex(15,-3){5.66}
\SetWidth{1.0} \Line(15,-3)(30,27)
\end{picture}}}}

\def\xthj44{{\scalebox{0.35}{ 
\begin{picture}(42,72)(8,-8)
\SetWidth{0.5}  \Vertex(30,57){5.66}
\SetWidth{1.0} \Line(30,57)(30,27) \SetWidth{0.5}
\Vertex(30,27){5.66} \SetWidth{1.0} \Line(45,-3)(30,27)
\SetWidth{0.5} \Vertex(45,-3){5.66} \Vertex(15,-3){5.66}
\SetWidth{1.0} \Line(15,-3)(30,27)
\put(25,-3){\em\huge x}
\end{picture}}}}

\def\ti5{{\scalebox{0.25}{ 
\begin{picture}(12,132)(23,-8)
\SetWidth{0.5}  \Vertex(30,117){5.66}
\SetWidth{1.0} \Line(30,117)(30,87) \SetWidth{0.5}
\Vertex(30,87){5.66} \Vertex(30,57){5.66} \Vertex(30,27){5.66}
\Vertex(30,-3){5.66} \SetWidth{1.0} \Line(30,-3)(30,27)
\Line(30,27)(30,57) \Line(30,87)(30,57)
\end{picture}}}}

\def\tj51{{\scalebox{0.25}{ 
\begin{picture}(42,102)(53,-38)
\SetWidth{0.5}  \Vertex(61,27){4.24}
\SetWidth{1.0} \Line(75,57)(90,27) \Line(60,27)(75,57)
\SetWidth{0.5} \Vertex(90,-3){5.66} \Vertex(60,27){5.66}
\Vertex(75,57){5.66} \Vertex(90,-33){5.66} \SetWidth{1.0}
\Line(90,-33)(90,-3) \Line(90,-3)(90,27) \SetWidth{0.5}
\Vertex(90,27){5.66}
\end{picture}}}}

\def\tk52{{\scalebox{0.25}{ 
\begin{picture}(42,102)(23,-8)
\SetWidth{0.5}  \Vertex(60,57){5.66}
\Vertex(45,87){5.66} \SetWidth{1.0} \Line(45,87)(60,57)
\SetWidth{0.5} \Vertex(30,57){5.66} \SetWidth{1.0}
\Line(30,57)(45,87) \SetWidth{0.5} \Vertex(30,-3){5.66}
\SetWidth{1.0} \Line(30,-3)(30,27) \SetWidth{0.5}
\Vertex(30,27){5.66} \SetWidth{1.0} \Line(30,57)(30,27)
\end{picture}}}}

\def\tl53{{\scalebox{0.25}{ 
\begin{picture}(42,102)(8,-8)
\SetWidth{0.5}  \Vertex(30,57){5.66}
\Vertex(30,27){5.66} \SetWidth{1.0} \Line(30,57)(30,27)
\SetWidth{0.5} \Vertex(30,87){5.66} \SetWidth{1.0}
\Line(30,27)(45,-3) \SetWidth{0.5} \Vertex(15,-3){5.66}
\SetWidth{1.0} \Line(15,-3)(30,27) \Line(30,57)(30,87)
\SetWidth{0.5} \Vertex(45,-3){5.66}
\end{picture}}}}

\def\tm54{{\scalebox{0.25}{ 
\begin{picture}(42,72)(8,-38)
\SetWidth{0.5}  \Vertex(30,-3){5.66}
\SetWidth{1.0} \Line(30,27)(30,-3) \Line(30,-3)(45,-33)
\SetWidth{0.5} \Vertex(15,-33){5.66} \SetWidth{1.0}
\Line(15,-33)(30,-3) \SetWidth{0.5} \Vertex(45,-33){5.66}
\SetWidth{1.0} \Line(30,-33)(30,-3) \SetWidth{0.5}
\Vertex(30,-33){5.66} \Vertex(30,27){5.66}
\end{picture}}}}

\def\tn55{{\scalebox{0.25}{ 
\begin{picture}(42,72)(8,-38)
\SetWidth{0.5}  \Vertex(15,-33){5.66}
\Vertex(45,-33){5.66} \Vertex(30,27){5.66} \SetWidth{1.0}
\Line(45,-33)(45,-3) \SetWidth{0.5} \Vertex(45,-3){5.66}
\Vertex(15,-3){5.66} \SetWidth{1.0} \Line(30,27)(45,-3)
\Line(15,-3)(30,27) \Line(15,-3)(15,-33)
\end{picture}}}}

\def\tp56{{\scalebox{0.25}{ 
\begin{picture}(66,111)(0,0)
\SetWidth{0.5}  \Vertex(30,66){5.66}
\Vertex(45,36){5.66} \SetWidth{1.0} \Line(30,66)(45,36)
\Line(15,36)(30,66) \SetWidth{0.5} \Vertex(30,6){5.66}
\Vertex(60,6){5.66} \SetWidth{1.0} \Line(60,6)(45,36)
\SetWidth{0.5}
\SetWidth{1.0} \Line(45,36)(30,6) \SetWidth{0.5}
\Vertex(15,36){5.66}
\end{picture}}}}

\def\tq57{{\scalebox{0.25}{ 
\begin{picture}(81,111)(0,0)
\SetWidth{0.5}  \Vertex(45,36){5.66}
\Vertex(30,6){5.66} \Vertex(60,6){5.66} \SetWidth{1.0}
\Line(60,6)(45,36) \SetWidth{0.5}
\SetWidth{1.0} \Line(45,36)(30,6) \SetWidth{0.5}
\Vertex(75,36){5.66} \SetWidth{1.0} \Line(45,36)(60,66)
\Line(60,66)(75,36) \SetWidth{0.5} \Vertex(60,66){5.66}
\end{picture}}}}

\def\tr58{{\scalebox{0.25}{ 
\begin{picture}(81,111)(0,0)
\SetWidth{0.5}  \Vertex(60,6){5.66}
\Vertex(75,36){5.66} \SetWidth{1.0} \Line(60,66)(75,36)
\SetWidth{0.5} \Vertex(60,66){5.66}
\SetWidth{1.0} \Line(60,36)(60,66) \Line(60,6)(60,36)
\SetWidth{0.5} \Vertex(60,36){5.66} \Vertex(45,36){5.66}
\SetWidth{1.0} \Line(60,66)(45,36)
\end{picture}}}}

\def\ap1{{\scalebox{0.35}{ 
\begin{picture}(70,42)(13,-35)
\SetWidth{0.5}  \Vertex(45,-3){5.66}
\Vertex(30,-33){5.66} \Vertex(60,-33){5.66} \SetWidth{1.0}
\Line(45,-3)(30,-33) \Line(60,-33)(45,-3)
\put(38,-38){\em \huge $\iu$}
\end{picture}}}}

\def\aq2{{\scalebox{0.35}{ 
\begin{picture}(70,42)(13,-35)
\SetWidth{0.5}  \Vertex(45,-3){5.66}
\Vertex(25,-33){5.66} \Vertex(65,-33){5.66} \SetWidth{1.0}
\Line(45,-3)(25,-33) \Line(65,-33)(45,-3)
\put(38,-38){\em \huge $x_2$}
\end{picture}}}}

\def\ao3{{\scalebox{0.35}{ 
\begin{picture}(42,72)(38,-8)
\SetWidth{0.5}  \Vertex(70,27){5.66}
\Vertex(45,-3){5.66} \SetWidth{1.0} \Line(45,-3)(70,27)
\SetWidth{0.5} \Vertex(70,58){5.66} \SetWidth{1.0}
\Line(70,27)(70,58) \SetWidth{0.5} \Vertex(95,-3){5.66}
\SetWidth{1.0} \Line(95,-3)(70,27)
\put(65,-5){\em \huge ${x_1}$}
\end{picture}}}}

\def\au4{{\scalebox{0.35}{ 
\begin{picture}(42,72)(38,-8)
\SetWidth{0.5}  \Vertex(40,27){5.66}
\Vertex(40,-3){5.66} \SetWidth{1.0} \Line(40,27)(40,-3)
\SetWidth{0.5} \Vertex(60,57){5.66} \SetWidth{1.0}
\Line(40,27)(60,57) \SetWidth{0.5} \Vertex(80,27){5.66}
\SetWidth{1.0} \Line(80,27)(60,57)
\put(50,20){\em \huge ${x_2}$}
\end{picture}}}}

\def\ay5{{\scalebox{0.35}{ 
\begin{picture}(70,72)(13,-58)

\SetWidth{0.5} \Vertex(50,-3){5.66}
\Vertex(30,-33){5.66} \Vertex(70,-33){5.66} \SetWidth{1.0}
\Line(50,-3)(30,-33) \Line(70,-33)(50,-3)
\put(40,-33){\em\huge ${x_2}$}
\SetWidth{0.5}
\Vertex(10,-63){5.66} \Vertex(50,-63){5.66} \SetWidth{1.0}
\Line(30,-33)(10,-63) \Line(30,-33)(50,-63)
\put(18,-63){\em\huge ${x_1}$}
\end{picture}}}}

\def\az6{{\scalebox{0.35}{ 
\begin{picture}(70,72)(13,-48)

\SetWidth{0.5} \Vertex(50,-3){5.66}
\Vertex(30,-33){5.66} \Vertex(70,-33){5.66} \SetWidth{1.0}
\Line(50,-3)(30,-33) \Line(70,-33)(50,-3)
\put(40,-33){\em\huge ${x_2}$}
\SetWidth{0.5}
\Vertex(10,-63){5.66} \Vertex(50,-63){5.66} \SetWidth{1.0}
\Line(30,-33)(10,-63) \Line(30,-33)(50,-63)
\put(18,-63){\em\huge $\iu$}
\end{picture}}}}

\def\as7{{\scalebox{0.35}{ 
\begin{picture}(70,72)(13,-48)

\SetWidth{0.5} \Vertex(50,-3){5.66}
\Vertex(30,-33){5.66} \Vertex(70,-33){5.66} \SetWidth{1.0}
\Line(50,-3)(30,-33) \Line(70,-33)(50,-3)
\put(40,-33){\em\huge $\iu$}
\SetWidth{0.5}
\Vertex(10,-63){5.66} \Vertex(50,-63){5.66} \SetWidth{1.0}
\Line(30,-33)(10,-63) \Line(30,-33)(50,-63)
\put(18,-63){\em\huge ${x_1}$}
\end{picture}}}}

\def\aw8{{\scalebox{0.35}{ 
\begin{picture}(70,72)(13,-48)

\SetWidth{0.5} \Vertex(50,-3){5.66}
\Vertex(30,-33){5.66} \Vertex(70,-33){5.66} \SetWidth{1.0}
\Line(50,-3)(30,-33) \Line(70,-33)(50,-3)
\put(40,-33){\em\huge $\iu$}
\SetWidth{0.5}
\Vertex(10,-63){5.66} \Vertex(50,-63){5.66} \SetWidth{1.0}
\Line(30,-33)(10,-63) \Line(30,-33)(50,-63)
\put(18,-63){\em\huge $\iu$}
\end{picture}}}}

\def\av9{{\scalebox{0.35}{ 
\begin{picture}(70,42)(13,-35)
\SetWidth{0.5}  \Vertex(45,-3){5.66}
\Vertex(20,-33){5.66} \Vertex(70,-33){5.66} \SetWidth{1.0}
\Line(45,-3)(20,-33) \Line(70,-33)(45,-3)
\put(38,-38){\em \huge ${x_2}$}
\end{picture}}}}

\def\ax10{{\scalebox{0.35}{ 
\begin{picture}(70,42)(13,-35)
\SetWidth{0.5}  \Vertex(45,-3){5.66}
\Vertex(20,-33){5.66} \Vertex(70,-33){5.66} \SetWidth{1.0}
\Line(45,-3)(20,-33) \Line(70,-33)(45,-3)
\put(38,-38){\em \huge ${x_1}$}
\end{picture}}}}
\begin{document}

\title[Hopf algebra, Rota-Baxter algebras and rooted trees]{Hopf algebra structure on free Rota-Baxter algebras by angularly decorated rooted trees}

\author{Xigou Zhang}
\address{Department of Mathematics, Jiangxi Normal University, Nanchang, Jiangxi 330022, China}
\email{xyzhang@jxnu.edu.cn}

\author{Anqi Xu}
\address{Department of Mathematics, Jiangxi Normal University, Nanchang, Jiangxi 330022, China}
\email{1147086708@qq.com}

\author{Li Guo}
\address{Department of Mathematics and Computer Science,
         Rutgers University,
         Newark, NJ 07102, USA}
\email{liguo@rutgers.edu}

\date{\today}
\begin{abstract}
By means of a new notion of subforests of an angularly decorated rooted forest, we give a combinatorial construction of a coproduct on the free Rota-Baxter algebra on angularly decorated rooted forests. We show that this coproduct equips the Rota-Baxter algebra with a bialgebra structure and further a Hopf algebra structure.
\end{abstract}

\subjclass[2010]
{16W99,16S10,17B38,16T05,05E16,16T30}

\keywords{Rota-Baxter algebra, angularly decoration, rooted forest, rooted tree, bialgebra, Hopf algebra, cocycle}

\maketitle

\tableofcontents

\setcounter{section}{0}

\allowdisplaybreaks

\section{Introduction}
\mlabel{sec:intr}

The study of rooted trees is important in combinatorics and has broad applications. Many algebraic structures have been equipped on rooted trees which give intuitive meaning to these abstract structures. Well-known examples of Hopf algebras on rooted trees include those of Connes-Kreimer, Loday-Ronco, Foissy-Holtkamp and Grassman-Larson~\mcite{CK,Fo1,Fo2,Hol,LR,GL}.

A major advantage of applying combinatorial objects and methods in algebra, especially in Hopf algebra, is that the algebraic operations can be described intuitively and explicitly. A prime example is the Connes-Kreimer Hopf algebra of rooted trees, as a baby model of the Hopf algebra of Feynman graphs arising from their study on renormalization of quantum field theory~\mcite{Kr,Kr2,DM}. Even though the coproduct has a recursive formula by a cocycle condition, the coproduct is made clear and useful by its explicit formula first in terms of admissible cuts and then in terms of subtrees and subforests. The recent work of Gao and Zhang~\mcite{ZG} on explicit construction of the coproduct in Loday-Ronco Hopf algebra of planar rooted trees is a similar contribution.

We are interested in the combinatorial construction of a Hopf algebra structures on free Rota-Baxter algebras by rooted trees.

The study of Rota-Baxter algebras originated from the work of G.Baxter~\mcite{Ba} on fluctuation theory in probability in 1960. It was studied by well-known mathematicians such as Atkinson, Cartier and Rota ~\mcite{FV,Ca,Ro} in the 1960-1970s. Its study has experienced a quite remarkable renascence in the recent decades with many applications in mathematics and physics ~\mcite{Ag,BBGN,EG,GK1,GK3,ML,MY,PBG,ZGG}, most notably the work of Connes and Kreimer on renormalization of quantum field theory ~\mcite{CK,CK1,EGK}. See ~\mcite{Gub} for further details and references.

As in the case of any algebraic structures, the understanding of free Rota-Baxter algebras is fundamental in the study of Rota-Baxter algebras and their applications. In the commutative case, the first construction of free commutative Rota-Baxter algebras by Rota~\mcite{Ro} led him to the close relationship between Spitzer's identity and Waring formula for symmetric functions. In the second construction of free commutative Rota-Baxter algebra~\mcite{Ca}, Cartier introduced a notion (stuffle) that became instrumental in the study of multiple zeta values~\mcite{BBBL} many years later. In the third construction~\mcite{GK1}, the authors gave a generalization of the shuffle product which turned out to be equivalent to the well-known quasi-shuffle product~\mcite{Ho} and stuffles. In the noncommutative case, free Rota-Baxter algebras have been constructed by various combinatorial objects, including bracketed words, leaf decorated forests and angularly decorated forests~\mcite{AM,EG,Guop}.
As in the commutative case, the different constructions of free Rota-Baxter algebras give different angles to study free Rota-Baxter algebras, even though they are naturally isomorphic.
More recently, a Hopf algebra structure has been given to free Rota-Baxter algebras on leaf decorated forests~\mcite{ZGG}.
The construction of free Rota-Baxter algebra is from a selected set of leaf decorated forests. The coproduct is obtained by first defining a coproduct on the whole space of leaf decorated forests and then taking the quotient to the space for the free Rota-Baxter algebra.
As such, the coproduct cannot be explicitly computed, since it is not clear how to obtain the coproduct of any given leaf decorated forest in the free Rota-Baxter algebra without taking quotients or going through a recursion.

In light of the importance of explicit constructions of the Connes-Kreimer and Loday-Ronco Hopf algebras mentioned above, for further study of the Hopf algebra on free Rota-Baxter algebras and for its applications, it is desirable to describe the coproduct directly on the rooted trees without the ambiguity of taking a quotient or the indirectness of going through a recursion.

This is the purpose of this paper. We will work with free Rota-Baxter algebras on angularly decorated planar rooted trees, following the construction in~\mcite{EG,Gub}. The advantage of this construction is that the underlying module is spanned by all planar rooted trees with angular decorations, in contrast to the construction by leaf decorated forests in~\mcite{ZGG} where a selected class of forests are used as representatives of the Hopf algebra on all leaf decorated planar rooted forests modulo the Rota-Baxter relation. We then introduce a coproduct on the angularly decorated planar rooted forests by suitably defining cuts and subforests, leading to a connected Hopf algebra structure on the free Rota-Baxter algebra.

The layout of the paper is as follows. In Section~\mref{sec:rbtree}, we first recall the notions of angularly decorated rooted forests and their use in constructing free Rota-Baxter algebras.
We then construct in Section~\mref{sec:bialg} a coproduct on free unitary Rota-Baxter algebra of angularly decorated rooted forests using a suitable notion of subforests, in analogy to the construction of the Connes-Kreimer coproduct on rooted trees. This coproduct is shown to be compatible with the multiplication on the free Rota-Baxter algebra, leading to a bialgebra structure on these forests. Finally the resulting bialgebra is shown in Section~\mref{ss:hopf} to be coaugmented, cofiltered and connected, hence can be enriched to a Hopf algebra.

\section{Rota-Baxter algebras and angularly decorated forests}
\mlabel{sec:rbtree}

In this section we recall the construction of the free Rota-Baxter algebra on angularly decorated forests.

\subsection{Angularly decorated forests}
\mlabel{ss:back}

We first recall the notion of Rota-Baxter algebras~\mcite{Ba,Gub}.
\begin{defn}
Let $\lambda$ be a given element of commutative ring $\bfk$. A {\bf Rota-Baxter algebra of weight $\lambda$} is a pair $(R, P)$ consisting of a $\bfk$-algebra $R$ and a linear operator $P:R\rightarrow R$ that satisfies the {\bf Rota-Baxter equation}
\begin{equation}
P(u)P(v)=P(uP(v))+P(P(u)v)+\lambda P(uv), \quad \forall u, v\in R.
\mlabel{ebr}
\end{equation}
\end{defn}

We give some basic examples of Rota-Baxter algebras and refer the reader to~\mcite{Gub} for more details.

\begin{exam} (Integration)
Let $R$ be the $\RR$-algebra of continuous functions on $\RR$. Define $P:R\to R$ by
the integration $$P(f)(x) = \int_0^x f(t) dt. $$
Then $P$ is a Rota-Baxter operator of weight 0.
\end{exam}

\begin{exam} (Scalar product)
Let $R$ be a \bfk-algebra. For any given $\lambda\in \bfk$, the operator
$$P_\lambda: R \to R\, \quad r\mapsto -\lambda r$$
is a Rota-Baxter operator of weight $\lambda$.
\end{exam}

\begin{exam} (Laurent series)
Let $R=\mathbb{C}[t^{-1},t]]$ be the algebra of Laurent series with coefficients in $\mathbb{C}$, consisting of series $\sum\limits_{n\geq N} a_n t^n$ where $N$ is any integer. Then the projection to the pole part:
$$ P\left(\sum_{n\geq N} a_n t^n\right):= \sum_{n<0} a_nt^n$$
is a Rota-Baxter operator of weight $-1$. Here the sum on the right is understood to be zero if $N\geq 0$. This operator plays an essential role in the study of renormalization of quantum field theory~\mcite{CK}. 
\end{exam}

In order to construct free Rota-Baxter algebras, we next recall the notions of planar rooted trees and planar rooted forests. Then we introduce angularly decorated rooted forests which will be our basic tools used in this paper.

A rooted tree is a connected and simply-connected set of vertices and oriented edges such that there is precisely one distinguished vertex, called the root, with no incoming edge. A planar rooted tree is a plane rooted tree with a fixed embedding into the plane. The following list shows the first few of them.

$$\ta1 \quad \tb2\quad\tc3\quad\td31\quad \te4\quad \thj44 \quad \tf41\quad\tg42\quad\th43$$

Let $\calt$ denote the set of planar rooted trees and $\calf$ the set of planar forests which can be identified with $S(\calt)$, the free semigroup generated by $\calt$ in which the product is denoted by $\bigsqcup$ or simply suppressed if there is danger of confusion. Then a planar rooted forest can be naturally expressed as an element of $S(\calt)$, of the form $T_1\bigsqcup T_2\cdots\bigsqcup T_n$ consisting of trees $T_1,\cdots, T_n.$ Here $\bigsqcup$ means putting two trees next to each other and will often be suppressed. Here some examples of planar rooted forests.

$$\ta1\sqcup \tc3 = \ta1\, \tc3,\quad
\tb2\sqcup \ta1\sqcup \tg42 = \tb2\, \ta1\, \tg42, \quad
\tc3\sqcup \td31\sqcup\tf41\sqcup\th43= \tc3\, \td31\,\tf41\,\th43$$

Obvious the multiplication $\bigsqcup$ satisfies the associativity.

We use $\lfloor T_1\bigsqcup T_2\cdots\bigsqcup T_n \rfloor$ to denote the tree obtained from the forest $T_1\bigsqcup T_2\cdots\bigsqcup T_n$ by adding a new root and an edge from the new root to each of the trees $T_1, \cdots, T_n$. In combinatorial terms, this is called the grafting of $T_1\bigsqcup T_2\cdots\bigsqcup T_n$ and is denoted by $B^+(T_1, \cdots ,T_n).$ So the operator $B^+$ is called the {\bf grafting operator}. For example,

$$ \lc \ta1\, \tb2\rc = B^+(\ta1\, \tb2) = \tg42, \quad \lc \ta1\,\ta1\,\ta1\rc = \th43.$$

For a rooted tree $T$, define the {\bf depth} $\dep(T)$ of $T$ to be the maximal length of the paths from the root to the leaves of the tree. For a forest
$F=T_1\bigsqcup T_2\cdots\bigsqcup T_\ell$ with rooted trees $T_1,T_2\cdots,T_\ell$, we define the {\bf depth} $\dep(F)$ of $F$ to be the maximum of the depths of the trees $T_1,\cdots,T_k$. We also define $\ell$ to be the {\bf length} of the forest $F$. So $\ell(F)$ is the number of tree factors in $F$.
For example,
  $$\ell(\di\,\tb2)=2, \ell(\tb2\,\di\, \tf41 )=3, \dep(\tb2)=1,\text{ }\dep(\tb2\, \tf41 )=2.$$

We now recall the construction of angularly decorated rooted trees. See~\mcite{EG2,Gub} for further details. 
  \begin{defn}
   Let X be a set.
   \begin{enumerate}
   \item
   An angularly decorated rooted tree is a planar rooted tree in which each angle (between two adjacent leafs) is decorated by an element of $X$.
\item
An angularly decorated rooted forest is a planar rooted forest with each angle is decorated by an element of $X$. Let $\calf_X^a$ denote the set of angularly decorated rooted forests with decoration set $X$. 
\end{enumerate}
\end{defn}
Note that the space between two rooted trees is taken as an angle. For example,
    $$ \di, \quad \tb2,\quad \xtd31, \quad \au4, \quad \ao3\quad $$
are angularly decorated trees; while
    $$ \quad \bullet x \bullet, \quad \di x_1\di x_2\di, \quad \tb2 x\di, \quad \di x_1\x2td31, \quad \tb2 x_1\x2td31$$
are angularly decorated forests.

From the definition of angularly decorated rooted forests, we add decorations to the angles of $\calf=S(\calt)$ to obtain angularly decorated forests. Intuitively, we use elements from $X$ to replace $\bigsqcup.$ So an angularly decorated rooted forest is of the form
\begin{equation}
T_1 x_1 T_2 x_2\cdots x_{\ell-1} T_\ell, \quad x_1,\cdots,x_{\ell-1}\in X,
\mlabel{eq:atdecomp}
\end{equation}
consisting of angularly decorated rooted trees $T_1,\cdots,T_\ell$.
The {\bf length} and {\bf depth} of an angularly decorated forest is defined to be the same as the underlying decorated forest. We note the the notion of length is different from the notion of breadth that we will introduced later.

\subsection{Rota-Baxter algebras by angularly decorated trees}
\mlabel{ss:treerba}

With notations in~Section~\mref{ss:back}, we let $\bfk\calf_X^a$ denote the free $\bfk$-module with basis $\calf_X^a$. We will equip $\bfk\calf_X^a$ with a Rota-Baxter algebra structure. In order to do this, we define a multiplication $\diamond_a$ on $\bfk\calf_X^a$.

For this purpose, we define
$$ \dia: \calf_X^a \times \calf_X^a \longrightarrow \bfk \calf_X^a$$
and then extend by bilinearity to a multiplication
$$ \dia: \bfk\,\calf_X^a \times \bfk\,\calf_X^a \longrightarrow \bfk\, \calf_X^a.$$
  The multiplication
$$ \dia: \calf_X^a \times \calf_X^a \longrightarrow \bfk \calf_X^a,$$
is defined recursively utilizing a grading structure on $\calf_X^a$ together with the grafting operator
$$B^+: \calf_X^a \longrightarrow \calf_X^a.$$

  The grading is given by the disjoint union (note the different meaning from the concatenation of trees)
$$ \calf_X^a = \bigsqcup_{n\geq 0} \calf_{X,n}^a,$$
where $\calf_{X,n}^a$ is the set of angularly decorated forests of depth $n$.
Then we have the linear grading
$$ \bfk\,\calf_X^a = \bigoplus_{n\geq 0} \bfk\,\calf_{X,n}^a.$$
  We will see later that the multiplication $\dia$ gives $\bfk\,\calf_X^a$ a filtered algebra, not a graded algebra. So we have to be careful.

  To be precise, the recursive definition of
$$ \dia: \calf_X^a \times \calf_X^a \longrightarrow \bfk \calf_X^a$$
means that we use induction on $n\geq 0$ to define

$$\dia{}_{,n}: \calf_{X,i}^a \times \calf_{X,j}^a \longrightarrow \bfk \calf_X^a$$
for all $i, j\geq 0$ with $i+j=n$.
Once this is achieved, then $\dia$ is well-defined as the direct sum of $\dia{}_{,n}, n\geq 0$, because of the disjoint union

$$ \calf_X^a \times \calf_X^a = \bigsqcup_{n\geq 0} \bigsqcup_{i+j=n} \calf_{X,i}^a \times \calf_{X,j}^a.$$

First let $n=0$. Then $i+j=n$ implies $i=j=0$. Note that
$$\calf_{X,0}^a = \left\{ \di x_1 \di \cdots \di x_k \di\,|\, k\geq 0\right \}$$
with the convention that $\di x_1 \di \cdots \di x_k \di=\di$ when $k=0$.
Then it is valid to define
 $$\dia{}_{,0}:\calf_{X,0}^a\times \calf_{X,0}^a\to \bfk\calf_X^a,$$
 by   $$\di \dia{}_{,0} \di=\di,\quad \di\dia{}_{,0} (\di x_1\di\cdots\di x_m\di)=(\di x_1\di\cdots\di x_m\di)\dia{}_{,0} \di=\di x_1\di\cdots\di x_m\di,$$
    $$(\di x_1\di\cdots\di x_m\di)\dia{}_{,0} (\di y_1\di\cdots\di y_n\di)=\di x_1\di\cdots\di x_m\di y_1\di\cdots \di y_n\di. $$

  For a given $k\geq 0$, assume that $\dia{}_{,m}, 0\leq m\leq k$, have been defined and we define $\dia{}_{,k+1}$. Then $k+1$ is greater or equal to 1.

  Let $T\in \calf_{X,i}^a,$ $T'\in \calf_{X,j}^a$ with $i+j=k+1\ge 1.$ We consider two cases.

(a) Suppose the length $\ell(T)=\ell(T')=1$, that is, $T$ and $T'$ are both angularly decorated rooted trees.
Then $T$ can be one and only one of the forms $\di$ or $B^+(\overline{T})$, and $T'$ can be one and only one of the forms $\di$ or $B^+(\overline{T}')$. Thus there are four cases and we define

\begin{equation}
T\dia{}_{\!\!\!,k+1} T' := \left\{ \begin{array}{ll}
\bullet,& \mathrm{\text{if } T=T'=\bullet},\\
T, & \mathrm{\text{if  }T'=\bullet},\\
T',& \mathrm{\text{if  }T=\bullet},\\
B^+(T\dia\overline{T'})+B^+(\overline{T}\dia T')+\lambda B^+(\overline{T}\dia\overline{T'}),& \mathrm{\text{if  }T=B^+(\overline{T}),T'=B^+(\overline{T'})}.
\end{array} \right.
\mlabel{eq:dia1}
\end{equation}
  Everything is clear except the last case. There we note that $\dep (B^+(\overline{T}))=\dep (\overline{T})+1$ and $\dep(B^+(\overline{T}'))=\dep (\overline{T}')+1$. So for the three terms in the last case, we have
  \begin{eqnarray*}
&\dep(T)+\dep(\ovf{T'})=&\dep(T)+\dep(T')-1=k;\\
&\dep(\ovf{T})+\dep(T')=&\dep(T)+\dep(T')-1=k;\\
&\dep(\ovf{T})+\dep(\ovf{T'})=&\dep(T)+\dep(T')-2=k-1.
\end{eqnarray*}
  Therefore $T\dia{}_{\!\!\!,k} \overline{T}'$, $\overline{T}\dia{}_{\!\!\!,k} T'$ and $\overline{T}\dia{}_{\!\!\!,k-1} \overline{T}'$ are all well-defined by the induction hypothesis.
  Thus the expression $T\dia{}_{\!\!\!,k+1} T'$ is well-defined.

(b) Suppose $\ell(T)=m\geq 2$ or $\ell(T')=n\geq 2$. Then $T=T_1 x_1\ldots x_{m-1} T_m$ and $T'=T_1'y_1\ldots y_{n-1} T_n'$ with $T_1,\cdots,T_m,\text{ }T_1',\cdots,T_n'\in \calf_X^a$, $x_1,\ldots ,x_{m-1},y_1,\ldots, y_{n-1}\in X$. Then we define:
\begin{equation}
 T\dia T':=T_1 x_1\ldots x_{m-2} T_{m-1} x_{m-1}(T_m\dia T_1')y_1T_2'y_2\ldots y_{n-1}T_n',
\mlabel{eq:prod2}
\end{equation}
with $T_m\dia T_1'$ defined in Case~(a). Note that $T_m\dia T_1'$ is a sum of angularly decorated trees by Case~(a). So the above equation gives a  well-defined sum of angularly decorated forests.

This completes the recursive definition of $\dia$ on $\calf_X^a$. Finally, as noted above, we expand the binary operation $\dia$ and $B^+$ to $\bfk \calf_X^a$ by bilinearity. Note that the multiplication $\dia$ is not commutative.
  For example, for
  $T_1:=\tb2,$ $T_2:=\di x\di$, we have
  $$T_1\dia T_2=\tb2 x\di\ne \di x\tb2 =T_2\dia T_1.$$

Adapting the arguments of~\mcite{EG,Gub}, we obtain
\begin{theorem}
\begin{enumerate}
\item
The triple $(\bfk\calf_X^a,\dia,B^+)$ is a non-commutative unitary Rota-Baxter algebra of weight $\lam$ with unit $\di$.
\item
Let $i_{x}:X\to \bfk\calf_X^a,\text{ }x\to \bullet x \bullet$ be the set map. The triple $(\bfk\calf_X^a,\dia,B^+,i_x)$ is a free non-commutative unitary Rota-Baxter algebra on a set $X$ characterized by the following universal property: for any  non-commutative unitary Rota-Baxter algebra $(R,\dirr,P)$ and any set map $f:X \to R$, there is a unique Rota-Baxter algebra homomorphism $\free{f}:\bfk\calf_X^a\to R$ such that $\free{f}\circ i_x=f.$
\end{enumerate}
\mlabel{thm:rba}
\end{theorem}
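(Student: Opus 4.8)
The plan is to adapt the arguments of~\mcite{EG,Gub} to the recursion~\eqref{eq:dia1}--\eqref{eq:prod2}; here is the structure I would follow.

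For Part~(1): since $B^+$ sends any element of $\bfk\calf_X^a$ to a planar tree of length one, every product $B^+(u)\dia B^+(v)$ is governed by the last line of~\eqref{eq:dia1}, which reads precisely
$$B^+(u)\dia B^+(v)=B^+\big(B^+(u)\dia v\big)+B^+\big(u\dia B^+(v)\big)+\lam\,B^+(u\dia v);$$
so $B^+$ is automatically a Rota-Baxter operator of weight $\lam$ on $(\bfk\calf_X^a,\dia)$, and $\di$ is seen to be a two-sided unit from the base case $\dia{}_{,0}$ together with a short induction on length using~\eqref{eq:prod2}. The substantive point is the associativity of $\dia$. I would first record the ``peeling identity'' that is immediate from~\eqref{eq:prod2}: if $\ell(F_1)\ge 2$ and $T$ denotes the last tree of $F_1$, say $F_1=HxT$, then $F_1\dia G=H\,x\,(T\dia G)$ for all $G$, where $H\,x\,(\,\cdot\,)$ is the concatenation with an $x$-decorated junction, extended linearly. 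Iterating this identity and using bilinearity, the equation $(F_1\dia F_2)\dia F_3=F_1\dia(F_2\dia F_3)$ either holds identically or is reduced to the same equation for a triple of strictly smaller total length; hence, by induction on $\ell(F_1)+\ell(F_2)+\ell(F_3)$, the statement collapses to the case in which $F_1,F_2,F_3$ are all planar rooted trees. That case I would settle by induction on $\dep(F_1)+\dep(F_2)+\dep(F_3)$: if one of the three trees is $\di$ both sides collapse by the first three cases of~\eqref{eq:dia1}; otherwise all three are graftings $B^+(\ovf{\,\cdot\,})$, and expanding both sides repeatedly by the last line of~\eqref{eq:dia1} turns each side into a $\bfk$-linear combination of elements $B^+(E)$, where $E$ is built from $\ovf{F_1},\ovf{F_2},\ovf{F_3}$ (and $F_1,F_2,F_3$) using $\dia$ and $B^+$ and has total depth strictly less than $\dep(F_1)+\dep(F_2)+\dep(F_3)$. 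By the induction hypothesis every such $E$ is unambiguous, and a finite check that the scalar coefficients -- powers of $\lam$ -- on the two sides agree, which is exactly the weight-$\lam$ Rota-Baxter identity bookkeeping, finishes the proof of associativity.

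For Part~(2): given a non-commutative unitary Rota-Baxter algebra $(R,\dirr,P)$ and a set map $f\colon X\to R$, I would define $\free f\colon\bfk\calf_X^a\to R$ by recursion on depth -- $\free f(\di)=1_R$; $\free f(\di x_1\di\cdots\di x_k\di)=f(x_1)\dirr\cdots\dirr f(x_k)$ on $\calf_{X,0}^a$; $\free f\big(B^+(\ovf T)\big)=P\big(\free f(\ovf T)\big)$ for a tree of positive depth; and $\free f(T_1x_1\cdots x_{\ell-1}T_\ell)=\free f(T_1)\dirr f(x_1)\dirr\cdots\dirr f(x_{\ell-1})\dirr\free f(T_\ell)$ for a general forest -- then extend $\bfk$-linearly. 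By construction $\free f\circ i_x=f$, $\free f\circ B^+=P\circ\free f$ and $\free f(\di)=1_R$; multiplicativity $\free f(F\dia F')=\free f(F)\dirr\free f(F')$ is proved by the same double induction (on total depth, refined by total length) as in Part~(1), the all-grafting step being the one in which the Rota-Baxter identity~\eqref{ebr} in $R$ is used to reproduce the three-term right-hand side of the last line of~\eqref{eq:dia1}. Uniqueness is automatic, since $\di$, the elements $\bullet x\bullet$ for $x\in X$, and their iterated $\dia$-products and $B^+$-images span $\bfk\calf_X^a$, so any Rota-Baxter homomorphism restricting to $f$ on $X$ must coincide with $\free f$.

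The hard part is the associativity in Part~(1): one must keep both nested inductions well-founded (total length strictly decreases under the peeling identity; total depth strictly decreases inside the outermost $B^+$, which is where the fact that $\dia$ is only filtered -- not graded -- has to be watched) and then carry out the finite scalar computation verifying that the weight-$\lam$ Rota-Baxter identity makes the two fully expanded sides coincide. Once this scaffolding is in place the mixed-length cases and all of Part~(2) are routine; this is precisely the computation performed in~\mcite{EG,Gub}, which the recursion~\eqref{eq:dia1}--\eqref{eq:prod2} merely repackages combinatorially.
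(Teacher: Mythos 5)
Your proposal is correct and follows essentially the same route as the paper, which offers no proof of its own for this theorem but simply invokes ``adapting the arguments of [EG, Gub]''; your outline is a faithful reconstruction of exactly those arguments (the Rota--Baxter identity read off from the last case of the recursive product, associativity by the double induction on length and then depth, and the universal property via the explicit recursively defined morphism with uniqueness from generation by $X$ under the product and the grafting operator). No discrepancy with the paper's intended proof.
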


\section{Bialgebra structure on the free Rota-Baxter algebra}
\mlabel{sec:bialg}

In~\mcite{ZGG}, the free Rota-Baxter algebra on leaf decorated rooted forests was equipped with a bialgebra and Hopf algebra structure. Through the isomorphism between the free Rota-Baxter algebra in~\mcite{ZGG} and the free Rota-Baxter algebra on angularly decorated rooted forests in this paper, the bialgebra and Hopf algebra structures on the former free Rota-Baxter algebra can be transported to the latter one. However, in either case, the coproduct is defined by a recursion via a cocycle condition. Even though there is a combinatorial description of the coproduct on leaf decorated rooted forests, like in the work of Connes and Kreimer~\mcite{CK}, this combinatorial description does not carry over to the quotient which gives the free Rota-Baxter algebra on leaf decorated rooted trees. Thus such a definition of coproduct is not explicit and does not reveal possible relationship with the combinatorial properties of rooted forests.

In this section, we use combinatorial procedure to define a coproduct on the free Rota-Baxter algebra on angularly decorated rooted forests. This procedure is given in terms of substructures of angularly decorated rooted forest, in analogue to the substructures of leaf decorated rooted forests in the coproduct of Connes-Kreimer.

\subsection{Construction of the coproduct}
\mlabel{ss:copr}

First we define the counit
$$\epsilon_a:\bfk\calf_X^a\to \bfk$$
by sending $\bullet$ to $1_\bfk$ and $0$ otherwise. Also we denote $m:\bfk\calf_X^a\ot \bfk\calf_X^a\to \bfk\calf_X^a$ for the product $\dia$ defined in the last section and $u:\bfk\to \bfk\calf_X^a, 1_k\mapsto \di$ for the unit.

Now we give a combinatorial definition of a coproduct on angularly decorated rooted forests $\bfk \calf_X^a$.

Our construction is motivated by the coproduct of rooted trees and forests of Connes and Kreimer~\mcite{CK}, defined by subforests. So we briefly recall their definition.

Let $T$ be a rooted tree or forest.
Recall that a {\bf subtree} $T'$ of a tree $T$, denoted $T'\pq T$, is a vertex of $T$ together with its descendants and the edges connecting these vertices. A subtree is called {\bf nontrivial} if it is not the one vertex tree $\di$. More generally, a {\bf subforest} $F'$ of a forest $F=T_1\cdots T_k$, denoted $F'\pq F$, is $F'=T_1'\cdots T_k'$ where $T_i'\pq T_i, 1\leq i\leq k$. Equivalently, a subforest $F'$ of $F$ is a subset of vertices of $F$ together with the edges connecting them, so that if a vertex is in $F$, then all descendants of the vertex are in $F$.

In this language, the Connes-Kreimer coproduct of rooted forests is defined by

\begin{equation}
\Delta(F):= \sum_{F'\pq F} F' \ot (F/F'),
\mlabel{eq:tcopr1}
\end{equation}
where $F/F'$ is the forest obtained when the vertices of $F$ and edges (both internal and external) connecting to these vertices are removed from $F$.

Now let $F$ be an angularly decorated forest, with the decomposition
$$ F=T_1 x_1 T_2x_2\cdots x_{k-1}T_k$$
as in Eq.~(\mref{eq:atdecomp}). A vertex of $F$ is called a {\bf non-leaf vertex} if it is not a leaf.
A {\bf subtree} $T$ of $F$, denoted $T\pq F$, is a subset of vertices of $T$ together with the edges connecting them, so that if a vertex is in $F$, then all descendants of the vertex is in $F$.
The only vertex of $\di$ is regarded as a leaf.

\begin{defn}
Let $F$ be an angularly decorated forest. Let $\iu$ be a symbol not in $X$.
\begin{enumerate}
\item
A {\bf \rsubtree} of $F$ is a non-leaf subtree of $F$ as defined above for rooted trees together with all its angular decorations.
\item
A {\bf \lsubtree} is a set (in fact a vector) of decorations of a \rsubtree without the underlying subtree.
\item
A {\bf \vsubtree} is either a \rsubtrees or a \lsubtrees.
\item
A {\bf (angularly decorated) \vsubforest} $H$ of an angularly decorated forest $F$, denoted $H\pq F$, consists of a sequence $H_1,\cdots, H_n$ of mutually disjoint \vsubtrees of $F$ in the order that they appear in $F$.
\item
The {\bf \closure} of a \vsubforest $H=H_1\cdots H_n$ of $F$, denoted $\cl(H)$, is the angularly decorated forest obtained from expanding $H$ as follows.
\begin{enumerate}
\item
If $H_1$ (resp. $H_n$) is a \lsubtree, then replace $H_1$ (resp. $H_n$) by $\di H_1$ (resp. $H_n \di$);
\item
If $H_i$ and $H_{i+1}$ are both \lsubtrees, then replace $H_iH_{i+1}$ by $H_i \di H_{i+1}$
\item
If $H_i$ and $H_{i+1}$ are both \rsubtrees, then replace $H_iH_{i+1}$ by $H_i \iu H_{i+1}$. \end{enumerate}
The role of the symbol $\iu$ is to represent the operation $\dia$ without executing it, that is, without multiplying out $H_i\dia H_{i+1}$, in order to keep and show the combinatorial structure.

This closure is called the {\bf angularly decorated forest generated by $H$}.
\item
For a \vsubforest $H=H_1\cdots H_n$ of $F$, the {\bf quotient forest} $F/H$ is obtained from $F$ by carrying out the following procedure for each $H_i, 1\leq i\leq n$:
\begin{enumerate}
\item
if $H_i$ is a \rsubtree, then take out of $F$ as for the usual rooted forests;
\item
if $H_i=\{x\}$ is a \lsubtree, then replace $x$ by $\iu$ at the angle that $x$ decorates.
\end{enumerate}
\end{enumerate}
\end{defn}

In both case, the role of $\iu$ is the multiplication $\dia$. See the examples below.

\begin{exam} \mlabel{ex:adsub}
\begin{enumerate}
\item
For the angularly decorated tree $\xtd31$, the \rsubtrees are the trivial tree $\bullet$ and $\xtd31$, the only \lsubtree is $x$. Thus the \vsubforests are
$\bullet, \xtd31, x$. Their closures are
\begin{equation}\mlabel{eq:subfxtd31}
\bullet, \xtd31, \bullet x \bullet.
\end{equation}
The corresponding quotients are
$$ \xtd31, \bullet,  \ap1.$$
\item
For the angularly decorated tree $\ay5$, the \rsubtrees are
$ \bullet, \ax10,\  \ay5$, the \lsubtrees are $x_1, x_2$. Thus the \vsubforests, their closures and quotients are
\smallskip

\begin{tabular}{|c||c|c|c|c|c|c|c|}
\hline
\vsubforests &
 $\bullet$& $x_1$& $\ax10$& $x_2$& $x_1 x_2$& $\ax10 x_2$& $\ay5$ \\
 &&&&&&&\\ \hline
closures
& $\bullet$ &$\bullet x_1\bullet$& $\ax10$ & $\bullet x_2\bullet$ & $\bullet x_1\bullet x_2\bullet$ & $\ax10 x_2\bullet$ & $\ay5$\\
&&&&&&&\\ \hline
quotients & $\ay5$ & $ \az6$ & $\av9$ & $\as7$ & $\aw8$ & $\ap1$ & $\di$ \\
&&&&&&& \\ \hline
\end{tabular}
\end{enumerate}
\end{exam}

  \begin{defn}
   For $F\in \bfk\calf_X^a$, with notations above, we define the {\bf angular coproduct} of $F$ by
\begin{equation}
\sj(F):=\sum_{G\pq T} \cl(G)\ot T/G.
\mlabel{eq:acopr3}
\end{equation}
If $\iu$ appears in the right hand side, then replace $\iu$ by $\dia$.
   \end{defn}

\begin{exam} \mlabel{ex:adcoprod}
For the angularly decorated trees in Example~\mref{ex:adsub}, we have
\begin{eqnarray*}
   &\sj(\xtd31)&=\bullet\ot \xtd31 +\xtd31\ot \bullet+ \bullet x\bullet\ot \ap1 \quad \text{(by Eq.~\eqref{eq:acopr3})}\\
   &&=\bullet\ot\xtd31+ \xtd31\ot \bullet+\bullet x\bullet\ot B^+(\bullet\iu \bullet) \quad \text{(by the definition of $B^+$)}\\
   &&=\bullet\ot\xtd31+ \xtd31\ot \bullet+\bullet x\bullet\ot \tb2 \quad \text{(by $\bullet\iu \bullet:=\bullet \dia \bullet=\bullet$)}
   \end{eqnarray*}
For another example, we compute
  \begin{eqnarray*}
   \sj(\ay5)&=&\di\ot \ay5+\di x_1\di\ot \az6+\ax10\ot\av9+\di x_2\di\ot\as7\\
   &&+\di x_1\di x_2\di\ot \aw8+\ax10 x_2\di\ot\ap1+\ay5\ot \di \quad \text{(by Eq.~\eqref{eq:acopr3})}\\
   &=&\di\ot \ay5+\di x_1\di\ot B^+(B^+(\di\iu\di)x_2\di)+\di x_2\di\ot B^+(\ax10\iu\di)\\
   &&+\di x_1\di x_2\di\ot B^+(B^+(\di\iu \di)\iu\di)+\ax10 x_2\di\ot B^+(\di\iu\di)+\ax10\ot\av9+\ay5\ot \di\\
   && \hspace{7cm} \text{(by the definition of $B^+$)}\\
   &=&\di\ot \ay5+\di x_1\di\ot B^+(\tb2 x_2\di)+\di x_2\di\ot B^+(\ax10)\\
   &&+\di x_1\di x_2\di\ot B^+(\tb2)+\ax10 x_2\di\ot B^+(\di)+\ax10\ot\av9 +\ay5\ot \di\\
   && \hspace{7cm} \text{(by $\bullet\iu \bullet=\bullet \dia \bullet =\bullet$)}\\
   &=&\di\ot \ay5+\di x_1\di\ot\au4+\di x_2\di\ot\ao3\\
   &&+\di x_1\di x_2\di\ot \tc3+\ax10 x_2\di\ot \tb2+\ax10\ot\av9 +\ay5\ot \di\\
   && \hspace{7cm}  \text{(by the definition of $B^+$)}\\
   \end{eqnarray*}
\end{exam}

   \smallskip

Next we give another description of the angular coproduct in compatible with the decomposition
of $F$ in~(\mref{eq:atdecomp}):
$$ F=T_1x_1T_2x_2\cdots x_{k-1}T_k,$$
for $x_1,\cdots x_{k-1}\in X,$ $T_1,\cdots T_k\in \calf_X^a.$
Then a \vsubforest of $F$ is of the form, called a {\bf factorwise \vsubforest}
$$ F'=T_1' x_1' T_2' x_2' \cdots x_{k-1}'T_k',$$
where $T_i'$ is a \vsubforest of $T_i, 1\leq i\leq k,$ and $x_i'$ is either $x_i$ or $\iu, 1\leq i\leq k$. Then the quotient forest $F/F'$ is
$$ F/F'=(T_1/T_1')(x_1/x_1')(T_2/T_2')(x_2/x_2')\cdots (x_{k-1}/x_{k-1}')(T_k/T_k'),$$
where $T_i/T_i'$ is the quotient tree and $x_i/x_i'$ is $\iu$ or $x_i$ depending on $x_i'$ being $x_i$ or $\iu$.  It is called a {\bf factorwise quotient forest}.

Then we have the following alternative definition of $\sj$.
For $F\in\calf_X^a$ with the decomposition
 $F=T_1 x_1 T_2 x_2 \cdots x_{k-1} T_k$ where  $T_1,$ $\cdots,$ $T_k\in \calf_X^a,$ and $x_1,$ $\cdots,$ $x_{k-1}\in X.$
 Then with the notions above, we have
\begin{equation}
\sj(F):=\sum_{F'\pq F}F'\ot F/F'.
\mlabel{eq:acopr4}
\end{equation}

This description is particularly convenient when there are multiple tree factors in a forest, as shown in the following example.

\begin{exam} \mlabel{ex:coprod2}
Consider the angularly decorated forest $F=\ax10 x_2\bullet$. The following table gives the \vsubforests, their closures and the \vsubforests given factor-by-factor. We first note that
$\ax10$ has three \vsubforests, $\bullet$ has one and $x_2'$ has two two choices: $x_2'=\iu, x_2$. Thus altogether, there are six \vsubforests of $F$. Their corresponding closures, factorwise subforests, factorwise quotients are listed in the following table. \\

\begin{tabular}{|c||c|c|c|c|c|c|}
\hline
\vsubforests & $\bullet$ & $x_1$ & $\ax10$ & $x_2$ & $x_1 x_2$ & $\ax10 x_2 $\\ \hline
closures & $\bullet$ 	& $\bullet x_1\bullet $ & $\ax10$ & $\bullet x_2\bullet$ & $\bullet x_1\bullet x_2\bullet $ & $\ax10 x_2 \bullet$\\ \hline
factorwise \vsubforests & $\bullet \iu \bullet \iu \bullet$ 	& $\bullet x_1\bullet\iu \bullet $ & $\ax10\iu \bullet$ & $\bullet \iu \bullet x_2\bullet$ & $\bullet x_1\bullet x_2\bullet $ & $\ax10 x_2 \bullet$\\ \hline
factorwise quotients & $\ax10 x_2 \bullet$ & $\ap1 x_2 \bullet$ &  $\bullet \iu \bullet x_2 \bullet$ & $\ax10 \iu \bullet $ & $\ap1 \iu\bullet$ & $\bullet\iu \bullet \iu \bullet$ \\ \hline
reduced quotients & $\ax10 x_2 \bullet$ & $\ap1 x_2 \bullet$ &  $ \bullet x_2 \bullet$ & $\ax10 $ & $\ap1 $ & $\bullet$ \\ \hline
\end{tabular}
\smallskip
\\

Thus we have the coproduct
  \begin{eqnarray*}
  &\sj(\ax10 x_2\bullet)
  &= \bullet\ot \ax10 x_2\bullet+\bullet x_1\bullet\ot \ap1 x_2\bullet+\ax10\ot \bullet x_2\bullet+\bullet x_2\bullet\ot\ax10\\
  &&+(\bullet x_1\bullet x_2\bullet)\ot \ap1 + \ax10 x_2\bullet\ot \bullet\\
  &&= \bullet\ot \ax10 x_2\bullet+\bullet x_1\bullet\ot B^+(\bullet\iu\bullet)x_2\bullet+\ax10\ot \bullet x_2\bullet+\bullet x_2\bullet\ot \ax10\\
  &&+(\bullet x_1\bullet x_2\bullet)\ot B^+(\di \iu \di) + \ax10 x_2\bullet\ot \bullet\\
  &&= \bullet\ot \ax10 x_2\bullet+\bullet x_1\bullet\ot\tb2 x_2\bullet+\ax10\ot \bullet x_2\bullet+\bullet x_2\bullet\ot\ax10\\
  &&+\bullet x_1\bullet x_2\bullet\ot\tb2+ \ax10 x_2\bullet\ot \bullet
  \end{eqnarray*}
\end{exam}

\subsection{The bialgebra structure}
\mlabel{ss:prop}
  \begin{theorem}
  Let $\sj:\bfk\calf_X^a\to \bfk\calf_X^a\ot \bfk\calf_X^a$ be the angular coproduct defined in Eq.~(\mref{eq:acopr3}) or (\mref{eq:acopr4}). Then $\sj$ satisfies the follow properties.
  \begin{enumerate}
  \item $\sj(\bullet)=\bullet\ot \bullet;$
  \mlabel{it:acopr1}
  \item $\sj(\di x\di)=\di x\di\ot\di+\di\ot\di x\di,$ $x\in X$;
  \mlabel{it:acopr2}
  \item
  $\sj(B^+(F))=B^+(F)\ot\di+(\id\ot B^+)(\sj(F))$ for all $F\in \calf_X^a$;
  \mlabel{it:acopr3}
  \item
  $\sj(F_1\dia F_2)=\sj(F_1)\dia\sj(F_2)$ for $F_1,$ $F_2\in\calf_X^a$.
  \mlabel{it:acopr4}
  \end{enumerate}
  \mlabel{thm:acopr}
  \end{theorem}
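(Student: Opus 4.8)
The plan is to verify the four identities separately; \eqref{it:acopr1}, \eqref{it:acopr2} and \eqref{it:acopr3} are direct consequences of the combinatorial definition, while \eqref{it:acopr4} is the one requiring real work. For \eqref{it:acopr1}, the only virtual subforest of $\di$ is $\di$ itself, whose closure and quotient are both $\di$. For \eqref{it:acopr2} I write $\di x\di$ in the factorwise form $T_1 x_1 T_2$ with $T_1=T_2=\di$ and $x_1=x$; there are then exactly two virtual subforests, according as $x_1'$ is $x$ or $\iu$, and reading off the closures and quotients from the definitions and replacing $\iu$ by $\dia$ (recall $\di\dia\di=\di$) yields exactly the two terms $\di x\di\ot\di$ and $\di\ot\di x\di$.

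For \eqref{it:acopr3} I would classify the virtual subforests of $B^+(F)$. The adjoined root of $B^+(F)$ is a non-leaf vertex, and any subtree of $B^+(F)$ containing it is all of $B^+(F)$; consequently every virtual subtree of $B^+(F)$ other than the full real subtree $B^+(F)$ lies below the root and is a virtual subtree of the forest $F$ (the between-tree angles of $F$ become the angles at the new root). Hence each virtual subforest $G\pq B^+(F)$ is either $G=B^+(F)$ itself, in which case $\cl(G)\ot B^+(F)/G=B^+(F)\ot\di$, or a virtual subforest $G\pq F$; in the latter case $\cl(G)$ is unchanged, as it depends only on the ordered sequence of virtual subtrees and their types, and $B^+(F)/G=B^+(F/G)$, because removing the real subtrees of $G$ (each replaced by $\di$) and marking the leaf subtrees of $G$ all take place below the untouched root. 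Summing the contributions of the second type of $G$ gives $(\id\ot B^+)(\sj(F))$, which proves \eqref{it:acopr3}.

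For \eqref{it:acopr4} I would induct on $n:=\dep(F_1)+\dep(F_2)$, and for fixed $n$ run a secondary induction on $\ell(F_1)+\ell(F_2)$. If $F_1=\di$ or $F_2=\di$ the identity is immediate from \eqref{it:acopr1}, since $\di\ot\di$ is the unit of $(\bfk\calf_X^a\ot\bfk\calf_X^a,\dia)$. If $F_1,F_2$ are trees $\neq\di$, write $F_i=B^+(\overline{F_i})$ and use the defining recursion $F_1\dia F_2=B^+(F_1\dia\overline{F_2})+B^+(\overline{F_1}\dia F_2)+\lambda B^+(\overline{F_1}\dia\overline{F_2})$; apply $\sj$, then \eqref{it:acopr3} to each summand, then the induction hypothesis to $\sj(F_1\dia\overline{F_2})$, $\sj(\overline{F_1}\dia F_2)$, $\sj(\overline{F_1}\dia\overline{F_2})$, all of strictly smaller $n$, and collect terms. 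On the other side, write $\sj(F_i)=F_i\ot\di+(\id\ot B^+)(\sj(\overline{F_i}))$ by \eqref{it:acopr3}, expand $\sj(F_1)\dia\sj(F_2)$ using the product of the tensor square, and apply the Rota-Baxter relation of Theorem~\ref{thm:rba} for $B^+$ on the right tensor legs; the two expansions then match term by term. Finally, when $\ell(F_1)+\ell(F_2)\geq 3$, I would use the factorwise formula~\eqref{eq:acopr4} to obtain $\sj(A\,x\,B)=\sj(A)\diamond^x\sj(B)$, where $u\diamond^x v:=\sum(u_1\,x\,v_1)\ot(u_2\dia v_2)+\sum(u_1\dia v_1)\ot(u_2\,x\,v_2)$ in Sweedler notation; combining this with the reduction~\eqref{eq:prod2} of a product of forests to the product of the two meeting tree-factors, and with the elementary identities $a\dia(b\,x\,c)=(a\dia b)\,x\,c$ and $(a\,x\,b)\dia c=a\,x\,(b\dia c)$ (because $\dia$ acts only at the interface of the rightmost tree of its left factor and the leftmost tree of its right factor), whence $u\dia(v\diamond^x w)=(u\dia v)\diamond^x w$ and $(u\diamond^x v)\dia w=u\diamond^x(v\dia w)$, reduces the identity for $(F_1,F_2)$ to the identity for pairs of strictly smaller total length, hence to the tree case.

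The main obstacle will be the two bookkeeping computations of the last paragraph: matching the two sums in the tree-tree case after applying \eqref{it:acopr3} and the Rota-Baxter relation, and checking the two $\diamond^x$--$\dia$ compatibilities. Both are finite checks, but the non-commutativity of $\dia$ must be respected throughout, so every factor order has to be tracked carefully; these arguments parallel those of~\cite{ZGG} but are now carried out directly in the combinatorial model of virtual subforests.
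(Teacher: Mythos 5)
Your proposal is correct and follows essentially the same route as the paper: items (a)--(b) by inspection, item (c) by classifying virtual subforests of $B^+(F)$ according to whether they contain the new root, and item (d) by induction on $\dep(F_1)+\dep(F_2)$, using the Rota--Baxter recursion for the tree--tree case together with the fact that $\id\ot B^+$ is a Rota--Baxter operator on the tensor square, and reducing the multi-factor case to the single-interface product of tree factors. Your explicit $\diamond^x$ bookkeeping and the secondary induction on length are just a more detailed organization of the paper's appeal to the factorwise description and standard decompositions.
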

  \begin{proof}
  By the definition of $\sj$, it is direct that Items~(\mref{it:acopr1}) and (\mref{it:acopr2}) hold.
  \smallskip

(\mref{it:acopr3}) We verify
   $$\sj(B^+(F))=B^+(F)\ot\di+(\id\ot B^+)(\sj(F))\text{  for all }F\in \bfk\calf_X^a$$
by the same argument for the cocycle property of the Connes-Kreimer coproduct for rooted trees, here made possible by the combinatorial description of the angular coproduct. Consider the coproduct
$$ \sj(B^+(F))=\sum_{G\pq B^+(F)} \cl(G) \ot F/G.$$
If $G\pq B^+(F)$ contains the root of $B^+(F)$, then $G=B^+(F)$ and the corresponding term in the sum is $B^+(F)\ot \di$. If $G\pq B^+(F)$ does not contain the root of $B^+(F)$, then by the definition of angular subforests, we have $G\pq F$. Further the corresponding quotient forest is obtained from the grafting of $F/G$. Therefore the corresponding term in the sum is $G \ot B^+(F/G)$. In summary, we obtain

\begin{eqnarray*}
\sj(B^+(F))&=& B^+(F)\ot \di + \sum_{G\pq F} \cl(G)\ot B^+(F/G)\\
&=& B^+(F)\ot \di + (\id \ot B^+) \left( \sum_{G\pq F} \cl(G)\ot F/G\right) \\
&=& B^+(F)\ot \di + (\id \ot B^+) \sj(F),
\end{eqnarray*}
as needed.

\smallskip
  \noindent
  (\mref{it:acopr4}). 
\delete{
By Item~(\mref{it:acopr3}), the coproduct $\sj$ satisfies the same cocycle condition as the coproduct on the free Rota-Baxter algebra on rooted trees in~\mcite{ZGG}. Further by Items~(\mref{it:acopr1}) and (\mref{it:acopr2}), the two coproducts also satisfy the same initial conditions.
  Further, the Rota-Baxter algebra structures are also identified. Therefore, the bialgebra and Hopf algebra structure on free Rota-Baxter algebra on leaf decorated forests transports to the same structures on the free Rota-Baxter algebra on angularly decorated forests discussed here. In particular, Item~(\mref{it:acopr4}) holds. For completeness, we give a direct proof of the multiplicativity of $\sj$ below.

We give a direct proof of 
}
We prove the desired multiplicativity by induction on the sum $\dep(F_1)+\dep(F_2)$ of depths of $F_1$ and $F_2$ in $\calf_X^a$.

First when $\dep(F_1)+\dep(F_2)=0$, then $F_1=\di x_1\di \cdots \di x_m$ and $F_2=\di x_{m+1}\di \cdots \di x_{m+n}$ for some $m, n\geq 1$.
For a set $I=\{i_1<\cdots <i_r\}$ of positive integers, we use the notation
$\di x_I \di: =\di x_{i_1}\di \cdots \di x_{i_r}\di$. Also denote $[m]=\{1, \cdots, m\}$ and $[m+1,m+n]:=\{m+1, \cdots, m+n\}$. Then for $I\subseteq [m]$ we have $(\di x_{[m]} \di)/(\di x_{[m]\backslash I} \di)$.
With these notations,  we obtain
$$\sj(F_1)=\sum_{I\subseteq [n]} (\di x_I \di) \ot  (\di x_{[m]\backslash I}\di), \sj(F_2)=\sum_{J\subseteq [m+1,m+n]} (\di x_J \di) \ot (\di x_{[m+1,m+n]\backslash J}\di).$$
Therefore,
\begin{eqnarray*}
\sj(F_1) \dia \sj(F_2)&=&
\sum_{I\subseteq [m], J\subseteq [m+1,m+n]}
\Big((\di x_I\di )\dia (\di x_J\di)\Big) \ot \Big( \big(\di x_{[m]\backslash I} \di\big)\dia \big(\di x_{[m+1,m+n]\backslash J}\di\big)\Big) \\
&=& \sum_{L\subseteq [m+n]} (\di x_L \di ) \ot (\di x_{[m+n]\backslash L} \di)\\
&=& \sj(F_1\dia F_2).
\end{eqnarray*}

Next assume that for $k\geq 0$, Item~(\mref{it:acopr3}) holds whenever $\dep(F_1)+\dep(F_2)\leq k$. Consider $F_1, F_2\in \calf_X^a$ with $\dep(F_1)+\dep(F_2)=k+1$. We first consider the case when the breadths of $F_1$ and $F_2$ are one. In this case, if further one of $F_1$ or $F_2$ has depth zero and so is of the form $\di x\di$, then by the definition of $\dia$ in Eqs.~(\mref{eq:dia1}) and (\mref{eq:prod2}), we have
$F_1\dia F_2 = \di x F_2$ or $F_1\dia F_2= F_1 x \di$. Then it is direct to check that Item~(\mref{it:acopr3}) holds.
In the remaining case when $F_1$ and $F_2$ both have positive depths, then $F_1=B^+(\ovf{F_1})$ and $F_2=B^+(\ovf{F_2})$.
Denote
$$\ovf{F_1}\star \ovf{F_2}:= F_1\dia \ovf{F_2}+\ovf{F_1}\dia F_2 +\lambda \ovf{F_1}\dia \ovf{F_2},$$
so that $F_1\dia F_2=B^+(\ovf{F_1} \star \ovf{F_2})$. Then by the cocycle condition and the induction hypothesis, we have
\begin{eqnarray*}
 \sj(F_1\dia F_2)&=&\sj(B^+(\ovf{F_1}\star \ovf{F_2})) \\
&=& B^+(\ovf{F_1}\star \ovf{F_2})\ot \di + (\id \ot B^+)(\sj(\ovf{F_1}\star \ovf{F_2}))\\
&=& B^+(\ovf{F_1}\star \ovf{F_2}) \ot \di + (\id \ot B^+)(\sj(\ovf{F_1})\star \sj(\ovf{F_2})).
\end{eqnarray*}

\begin{eqnarray*}
\sj(F_1\dia F_2)&=& \sj\Big(B^+(F_1\dia \ovf{F_2})+B^+(\ovf{F_1}\dia F_2) + \lambda B^+(\ovf{F_1}\dia \ovf{F_2})\Big) \\
&=& B^+(F_1\dia \ovf{F_2})\ot \di + (\id \ot B^+)(\sj(F_1\dia \ovf{F_2})) + B^+(\ovf{F_1}\dia F_2)\ot \di \\
&&+ (\id \ot B^+)(\sj(\ovf{F_1}\dia F_2)) + \lambda B^+(\ovf{F_1}\dia \ovf{F_2})\ot \di
+ \lambda (\id \ot B^+)(\sj(\ovf{F_1}\dia \ovf{F_2}))\\
&=& (F_1\dia F_2) \ot \di + (\id \ot B^+)(\sj(F_1)\dia \sj(\ovf{F_2})) \\
&&+ (\id \ot B^+)(\sj(\ovf{F_1})\dia \sj( F_2))
+ \lambda (\id \ot B^+)(\sj(\ovf{F_1})\dia \sj(\ovf{F_2}))\\
&=& (F_1\dia F_2)\ot \di + (\id \ot B^+)\big((F_1\ot \di + (\id\ot B^+)(\sj(\ovf{F_1}))\dia \sj(\ovf{F_2})\big) \\
&& + (\id \ot B^+)\Big(\sj(\ovf{F_1})\dia \big(F_2\ot \di + (\id\ot B^+)(\sj(\ovf{F_2}))\big)\Big)\\
&&+ \lambda (\id \ot B^+)(\sj(\ovf{F_1})\dia \sj(\ovf{F_2}))\\
&=& (F_1\dia F_2)\ot \di + (\id \ot B^+)\Big((F_1\ot \di)\dia \sj(\ovf{F_2})\Big) \\
&&+ (\id \ot B^+)\Big((\id\ot B^+)(\sj(\ovf{F_1}))\dia \sj(\ovf{F_2})\Big) \\
&& + (\id \ot B^+)\Big(\sj(\ovf{F_1})\dia \big(F_2\ot \di)\Big) + (\id\ot B^+)\Big(\sj(\ovf{F_1})\dia \big((\id\ot B^+)(\sj(\ovf{F_2}))\big)\Big)\\
&&+ \lambda (\id \ot B^+)(\sj(\ovf{F_1})\dia \sj(\ovf{F_2}))
\end{eqnarray*}

It is a general fact that if $P$ is a Rota-Baxter operator on an algebra $R$, then $\id \ot P$ is a Rota-Baxter algebra on the tensor product algebra $R\ot R$. Thus combining the third, fourth and fifth terms of the above equation gives $(\id\ot B^+)(\sj(\ovf{F_1})) \dia (\id\ot B^+)(\sj(\ovf{F_2}))$.
Thus from the above equation we obtain
\begin{eqnarray*}
\sj(F_1\dia F_2) &=& (F_1\dia F_2)\ot \di + (\id \ot B^+)\Big((F_1\ot \di)\dia \sj(\ovf{F_2})\Big) \\
&&+  (\id \ot B^+)\Big(\sj(\ovf{F_1})\dia \big(F_2\ot \di)\Big) + (\id\ot B^+)(\sj(\ovf{F_1})) \dia (\id\ot B^+)(\sj(\ovf{F_2})).
\end{eqnarray*}

On the other hand, we have
{\small
\begin{eqnarray*}
\sj(F_1)\dia \sj(F_2)&=&
\Big(B^+(\ovf{F_1})\ot \di +(\id\ot B^+)(\sj(\ovf{F_1}))\Big)\dia \Big(B^+(\ovf{F_2})\ot \di +(\id\ot B^+)(\sj(\ovf{F_2}))\Big) \\
&=& (F_1\dia F_2)\ot \di + (F_1\ot \di)\dia \big((\id\ot B^+)(\sj(\ovf{F_2}))\big) \\
&&+ \big((\id\ot B^+)(\sj(\ovf{F_1}))\big) \dia (F_2\ot \di)
+ \big((\id\ot B^+)(\sj(\ovf{F_1}))\big) \dia \big((\id\ot B^+)(\sj(\ovf{F_2}))\big).
\end{eqnarray*}
}

Since
$$ (\id \ot B^+)\Big((F_1\ot \di)\dia \sj(\ovf{F_2})\Big) =
(F_1\ot \di)\dia \big((\id\ot B^+)(\sj(\ovf{F_2}))\big),$$
we find that
$$\sj(F_1\dia F_2)=\sj(F_1)\dia \sj(F_2).$$

Finally when $F_1$ and $F_2$ have breadths $r\geq 1$ and $s\geq 1$ respectively,  with standard decompositions
$$F_1=F_{1,1}\dia \cdots \dia F_{1,r}, \quad F_2=F_{2,1}\dia \cdots \dia F_{2,s}.$$
Then noting that $\sj$ is defined to be compatible with the standard decomposition (see the alternative description) and that the standard decomposition of $F_1\dia F_2$ is
$$ F_1\dia F_2 = F_{1,1}\dia \cdots \dia F_{1,r-1}\dia (F_{1,r}\dia F_{2,1}) \dia F_{2,2} \dia \cdots \dia F_{2,s}.$$
Thus applying the previous case, we obtain
\begin{eqnarray*}
\sj(F_1\dia F_2)&=& \sj(F_{1,1}\dia \cdots \dia F_{1,r-1}\dia (F_{1,r}\dia F_{2,1}) \dia F_{2,2} \dia \cdots \dia F_{2,s}) \\
&=& \sj(F_{1,1})\dia \cdots \dia \sj(F_{1,r-1})\dia \sj(F_{1,r}\dia F_{2,1}) \dia \sj(F_{2,2}) \dia \cdots \dia \sj(F_{2,s}) \\
&=& \sj(F_{1,1})\dia \cdots \dia \sj(F_{1,r-1})\dia \sj(F_{1,r})\dia \sj(F_{2,1}) \dia \sj(F_{2,2}) \dia \cdots \dia \sj(F_{2,s}) \\
&=& \sj(F_{1,1}\dia \cdots \dia \sj(F_{1,r}))\dia \sj(F_{2,1} \dia  \cdots \dia F_{2,s}).
\end{eqnarray*}
This completes the proof of Item~(\mref{it:acopr4}).
\end{proof}

Now we verify the other conditions for $\bfk\calf_X^a$ to be a bialgebra.

\begin{theorem}
The quintuple $(\bfk\calf_X^a,m,u,\sj,\epsilon_a)$ is a bialgebra. 
\delete{
More precisely, $\sj$ and $\epsilon_a$ are algebra homomorphisms and the following diagrams commute.

\begin{equation}
\begin{split}
\xymatrix{
\bfk\calf_X^a \ar[r]^{\sj} \ar[d]_{\sj} &\bfk\calf_X^a\ot \bfk\calf_X^a \ar[d]^{\id\ot \sj}\\
\bfk\calf_X^a\ot \bfk\calf_X^a \ar[r]_<<<<{\sj\ot \id} & \bfk\calf_X^a\ot \bfk\calf_X^a\ot \bfk\calf_X^a}
\end{split}
\mlabel{eq:coasso}
\end{equation}

\begin{equation}
\begin{split}
\xymatrix{
 &\bfk\calf_X^a \ar[ld]_{\beta_l} \ar[d]^{\sj} \ar[rd]^{\beta_r}\\
\bfk\ot \bfk\calf_X^a &\bfk\calf_X^a\ot \bfk\calf_X^a \ar[l]_{\eb \ot \id} \ar[r]^<<<{\id\ot \eb} & \bfk\calf_X^a\ot \bfk }
\mlabel{eq:counit}
\end{split}\end{equation}
where $\beta_l:\bfk\calf_X^a\to \bfk\ot \bfk\calf_X^a,$ $F\to 1_k\ot F$, $\beta_r:\bfk\calf_X^a\to \bfk\calf_X^a\ot \bfk,$ $F\to F\ot 1_k.$
\begin{equation}
\begin{split}\xymatrix{
 \bfk\ar[d]_{\vartriangle_k} \ar[r]^u &\bfk\calf_X^a \ar[d]^{\sj}\\
\bfk\ot \bfk \ar[r]_<<<<{u\ot u} & \bfk\calf_X^a\ot \bfk\calf_X^a}
\qquad	\xymatrix{
\bfk\ar[r]^u \ar[rd]_{\id_\bfk} &
\bfk\calf_X^a \ar[d]^\eb\\
& \bfk}
\end{split}
\end{equation}
}
\mlabel{thm:abial}
\end{theorem}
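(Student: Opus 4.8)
The plan is to check the bialgebra axioms in three packets: that $(\bfk\calf_X^a,m,u)$ is an algebra --- which is Theorem~\mref{thm:rba} --- that $(\bfk\calf_X^a,\sj,\eb)$ is a coalgebra, and that $\sj$ and $\eb$ are unital algebra homomorphisms (equivalently, the compatibility squares). Multiplicativity of $\sj$ is exactly Theorem~\mref{thm:acopr}~(\mref{it:acopr4}), and $\sj(\di)=\di\ot\di$ is Theorem~\mref{thm:acopr}~(\mref{it:acopr1}) (recall $\bullet=\di$ is the unit), so $\sj$ is already an algebra homomorphism. For $\eb$ one has $\eb(\di)=1_\bfk$ by definition, and $\eb(F_1\dia F_2)=\eb(F_1)\eb(F_2)$ for basis forests $F_1,F_2$ follows from a short inspection of Eqs.~(\mref{eq:dia1}) and~(\mref{eq:prod2}): using that $\di$ is the unit, the tree $\bullet$ occurs in $F_1\dia F_2$ precisely when $F_1=F_2=\di$, since if one factor is a nontrivial tree $B^+(\ovf{T})$ then every term of $F_1\dia F_2$ has positive depth, and if one factor has length $\geq 2$ then every term of $F_1\dia F_2$ has length $\geq 2$.

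Both remaining packets rest on one observation: $\bfk\calf_X^a$ is generated, as a unital algebra together with the operator $B^+$, by $\di$ and the elements $\di x\di$, $x\in X$. This is immediate from the recursive description of $\calf_X^a$ --- a depth-zero forest $\di x_1\di\cdots\di x_k\di$ is the product $(\di x_1\di)\dia\cdots\dia(\di x_k\di)$, a tree of positive depth is $B^+$ of a forest of strictly smaller depth, and a forest of length $\geq 2$ is a $\dia$-product of trees --- so to show two algebra homomorphisms out of $\bfk\calf_X^a$ coincide it suffices to check that they agree on $\di$ and on each $\di x\di$ and that their locus of agreement is stable under $B^+$.

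For the counit property, put $\lambda:=(\eb\ot\id)\circ\sj$ and $\rho:=(\id\ot\eb)\circ\sj$, regarded as maps $\bfk\calf_X^a\to\bfk\calf_X^a$ via the canonical identifications; both are algebra homomorphisms. By Theorem~\mref{thm:acopr}~(\mref{it:acopr1}) and~(\mref{it:acopr2}) they send $\di$ to $\di$ and $\di x\di$ to $\di x\di$, and by the cocycle identity Theorem~\mref{thm:acopr}~(\mref{it:acopr3}) together with the vanishing of $\eb$ on any $B^+(G)$ one gets $\lambda(B^+(F))=B^+(\lambda(F))$ and $\rho(B^+(F))=B^+(F)$. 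Hence $\lambda=\rho=\id$, which is the counit axiom.

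For coassociativity, put $\Delta_L:=(\sj\ot\id)\circ\sj$ and $\Delta_R:=(\id\ot\sj)\circ\sj$, both algebra homomorphisms $\bfk\calf_X^a\to\bfk\calf_X^a\ot\bfk\calf_X^a\ot\bfk\calf_X^a$ (a tensor product of algebra homomorphisms is one for the tensor-product algebra structure). Unwinding $\sj(\di x\di)$ shows $\Delta_L$ and $\Delta_R$ agree on $\di$ and on each $\di x\di$. Substituting the cocycle identity into each side yields
$$\Delta_L(B^+(F))=B^+(F)\ot\di\ot\di+(\id\ot B^+\ot\id)(\sj(F)\ot\di)+(\id\ot\id\ot B^+)\Delta_L(F),$$
and the identical formula with every $\Delta_L$ replaced by $\Delta_R$; subtracting, $(\Delta_L-\Delta_R)(B^+(F))=(\id\ot\id\ot B^+)(\Delta_L-\Delta_R)(F)$. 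So $\{F:\Delta_L(F)=\Delta_R(F)\}$ is a subalgebra stable under $B^+$ containing the generators, whence $\Delta_L=\Delta_R$. The one place that needs care --- and the main obstacle --- is precisely this coassociativity step: rather than setting up a bijection between nested subforests, one routes it through the cocycle identity and the multiplicativity of $\sj$, after which it is bookkeeping. Combining all of the above, $(\bfk\calf_X^a,m,u,\sj,\eb)$ satisfies all the bialgebra axioms.
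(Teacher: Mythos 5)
Your proof is correct, but it takes a genuinely different route from the paper. The paper's actual proof is a one-line transport-of-structure argument: it invokes the isomorphism with the free Rota--Baxter algebra on leaf-decorated forests of~\mcite{ZGG}, observes (via Theorem~\mref{thm:acopr}) that this isomorphism matches the two coproducts, and imports the bialgebra axioms from there. You instead verify coassociativity and the counit axiom directly inside $\bfk\calf_X^a$, and your key device --- that $\bfk\calf_X^a$ is generated as a unital algebra under $\dia$ and $B^+$ by $\di$ and the $\di x\di$, so that two algebra homomorphisms agreeing on these generators and interacting identically with $B^+$ must coincide --- is sound: the identity $T_1x_1T_2=T_1\dia(\di x_1\di)\dia T_2$ produces each basis forest as a genuine word in the generators with no cross terms, and the recursions $\lambda(B^+(F))=B^+(\lambda(F))$, $\rho(B^+(F))=B^+(F)$ and $(\Delta_L-\Delta_R)(B^+(F))=(\id\ot\id\ot B^+)\big((\Delta_L-\Delta_R)(F)\big)$ all check out against the cocycle formula of Theorem~\mref{thm:acopr}~(\mref{it:acopr3}). (The paper's source also contains a commented-out direct proof, but even that one establishes coassociativity by a combinatorial bijection on nested subforests $G'\pq G\pq F$ rather than by your recursion through the cocycle identity.) The trade-off: the paper's argument is shorter but leans on the external reference and on the implicit claim that the cocycle condition plus initial values uniquely pins down the coproduct, whereas your proof is self-contained given Theorem~\mref{thm:acopr} and makes the Hopf-algebra structure here logically independent of~\mcite{ZGG}; the only point worth spelling out more is the multiplicativity of $\eb$, which you correctly reduce to the observation that $\di$ occurs in the expansion of $F_1\dia F_2$ only when $F_1=F_2=\di$.
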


\begin{proof}
By Theorem~\mref{thm:acopr}, the natural algebraic isomorphism between $(\bfk\calf_X^a,m,u)$ and the free Rota-Baxter algebra on $X$ in~\mcite{ZGG} preserves the coproducts. Then since the coproduct in~\mcite{ZGG} is compatible with the product and gives rise to a bialgebra, the same holds for the quintuple $(\bfk\calf_X^a,m,u,\sj,\epsilon_a)$. 
\end{proof}

\delete{	
To verify the diagram (\ref{eq:coasso}) of coassociativity, we check

\begin{eqnarray*}
&(\sj\ot \id)\circ \sj(F)&=(\sj\ot\id)\sum_{G\pq F}G\ot F/G\\
&&=\sum_{G\pq F}\left(\sum_{G'\pq G}G'\ot G/G'\right)\ot F/G\\
&&=\sum_{G'\pq G\pq F}G'\ot G/G'\ot F/G,
\end{eqnarray*}
  where we denote $G'$ the removed angularly decorated rooted forest of $G$ and $G/G'$ the remaining angularly decorated rooted forest of $G$.

On the other hand,
\begin{eqnarray*}
&(\id\ot \sj)\circ \sj(F)&=(\id\ot \sj)\sum_{G\pq F}G\ot F/G\\
&&=\sum_{G\pq F}G\ot \left(\sum_{G''\pq F/G}G''\ot (F/G)/{G''}\right).
\end{eqnarray*}
Here we denote $G{''}$ the removed angularly decorated rooted forest of $F/G$ and $(F/G)/G{''} $ the remaining angularly decorated rooted forest of $F/G.$ We can rewrite
\begin{eqnarray*}
&(\id\ot \sj)\circ \sj(F)&=\sum_{G\pq F}G\ot \left(\sum_{F_G/G\pq F/G}F_G/G\ot (F/G)/(F_G/G) \right).
\end{eqnarray*}
Here  $F_G$ is the removed angularly decorated rooted forest of $F$ and $F_G$ including descendant $G$, $F_G/G$ is the remaining angularly decorated rooted forest of $F_G$, in other words, if $G\pq F_G\pq F,$ $G{''}=F_G/G.$ Thus
\begin{eqnarray*}
&(\id\ot \sj)\circ \sj(F)&=\sum_{G\pq F_G\pq F}G\ot F_G/G\ot F/F_G.
\end{eqnarray*}
This verifies $(\sj\ot \id)\circ \sj(F)=(\id\ot \sj)\circ \sj(F).$\par

We next prove that $\eb$ is a counit as defined in diagram $(\mref{eq:counit})$. We verify the non-trivial case when $F$ is a non-leaf tree.
\begin{eqnarray*}
&(\eb\ot \id)\circ \sj(F)&=(\eb\ot \id)(F\ot \bullet+(\id\ot B^+)\circ \sj(\ovf{F}))\\
&&=0\ot \bullet+(\eb\ot \id)(\id\ot B^+)\left(\sum_{(\ovf{F})}\ovf{F}_{(1)}\ot \ovf{F}_{(2)}\right)\\
&&=(\eb\ot \id)\left(\sum_{(\ovf{F})}\ovf{F}_{(1)}\ot B^+(\ovf{F}_{(2)})\right)\\
&&=0+1_k\ot B^+(\ovf{F})\\
 &&=1_k\ot F\\
 &&=\beta_l(F).
 \end{eqnarray*}
 \begin{eqnarray*}
 &(\id\ot \eb)\circ \sj(F)&=(\id\ot \eb)(F\ot \bullet+(\id\ot B^+)\circ \sj(\ovf{F}))\\
 &&=F\ot 1_k+((\id\ot \eb)\circ(\id\ot B^+))(\sum_{(\ovf{F})}\ovf{F}_{(1)}\ot \ovf{F}_{(2)})\\
 &&=F\ot 1_k+(\id\ot \eb)(\sum_{(\ovf{F})}\ovf{F}_{(1)}\ot B^+(\ovf{F}_{(2)}))\\
 &&=F\ot 1_k+0\\
 &&=\beta_r(F).
 \end{eqnarray*}
$\text{So }(\eb\ot \id)\circ \sj=(\id\ot \eb)\circ \sj$. It is easy to check the other diagrams.

Theorem~\mref{thm:acopr} shows that $\sj$ is multiplicative. From the definition of $\sj$, we have known that $\sj(1_{\bfk\calf_X^a}) =1_{\bfk\calf_X^a\ot \bfk\calf_X^a}.$ Furthermore, $\eb$ is an algebra homomorphism. In fact we have
$$\eb\circ m(F_1\ot F_2)=\eb(F_1\dia F_2) = \left\{ \begin{array}{ll}
1_k, & \mathrm{F_1=F_2=\bullet},\\
0, & \textrm{otherwise}.
\end{array} \right.$$

\begin{eqnarray*}
	(m_k\circ (\eb\ot \eb))(F_1\ot F_2)&=&m_k(\eb(F_1)\ot \eb(F_2))\\
	&=& \left\{ \begin{array}{ll}
m_k(1_k\ot 1_k), & \mathrm{F_1=F_2=\bullet},\\
m_k(0\ot 0), & \textrm{otherwise}
\end{array} \right.\\
&=& \left\{ \begin{array}{ll}
1_k, & \mathrm{F_1=F_2=\bullet},\\
0, & \textrm{otherwise}.
\end{array} \right.
\end{eqnarray*}
So $\eb\circ m=m_k\circ (\eb\ot \eb).$

To finish the proof, we prove that $\eb$ preserves the units:
 $$\eb(1_{\bfk\calf_X^a})=\eb(\di)=1_k.$$
Also $\eb\circ u(1_k)=\eb(1_k\dia \bullet)=\eb(\bullet)=1_k=\id_k(1_k).$ So $\eb\circ u=\id_k$.

Thus $(\bfk\calf_X^a,m,u,\sj,\eb)$ is a bialgebra.
\end{proof}
}

\section{The Hopf algebra structure}
\mlabel{ss:hopf}

We end the paper by showing that the bialgebra of angularly decorated forests obtained in the last section is a Hopf algebra.

\begin{defn}
  A {\bf coaugmented coalgebra} is a quadruple $(C, \triangle, \epsilon,u)$ where $(C,\triangle,\epsilon)$ is a coalgebra and  $u:k\to C$ is a linear map,
called the coaugmentation, such that $\epsilon \circ u= \id_k.$
\end{defn}

In Section~\mref{sec:bialg}, we have defined $\epsilon_a:\bfk\calf_X^a\to \bfk$ and $u:\bfk\to \bfk\calf_X^a$, so that
  $$\eb \circ u=\id_k$$
  In other words, we have shown that $(\bfk\calf_X^a,\sj,\eb)$ is a coaugmented coalgebra.
\begin{defn}
(\mcite{GG}) A bialgebra $(H,m,u,\Delta,\epsilon)$ is called {\bf cofiltered} if there are $\bfk-$submodules $H^n$, $n\ge0$, such that
\begin{enumerate}
	\item
 $H^n\subseteq H^{n+1}$ for all $n\ge 0$;
 \item $H=\cup_{n\ge0}^\infty H^n$ for all $n\ge 0$;
 \item $\Delta(H^n)\subseteq \sum_{p+q=n} H^p\ot H^q,$ $n\ge0$;
 \item $H^n=\im u\oplus (H^n\cap \ker\epsilon),$
where $p,q\ge 0$.
$H$ is called connected (cofiltered) if in addition $H^0=\im u$.
\end{enumerate}
\end{defn}

\begin{defn}
 Let $\deg(F)$ denote the number of vertices of $F\in \calf_X^a.$
\end{defn}
Then we have $\deg(F_1\dia F_2)=\deg(F_1)+\deg(F_2)-1.$

Now we prove
\begin{prop}
With the above notations, $\bfk\calf_X^a$ is a connected, cofiltered coaugmented coalgebra.
\mlabel{pp:conn}
\end{prop}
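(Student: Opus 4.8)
The plan is to cofilter $\bfk\calf_X^a$ by the number of vertices. For $n\ge 0$ set
$$H^n:=\mathrm{span}_\bfk\{F\in\calf_X^a \mid \deg(F)\le n+1\}.$$
The coaugmented coalgebra structure $(\bfk\calf_X^a,\sj,\eb,u)$ has already been recorded (it is a coalgebra by Theorem~\mref{thm:abial}, and $\eb\circ u=\id_\bfk$ by construction), so only the cofiltration axioms and connectedness remain. Axiom~(a) is obvious and axiom~(b) holds since every forest has finitely many vertices. For axiom~(d) one uses $\eb(F)=\delta_{F,\di}$ on the basis: $H^n\cap\ker\eb$ is spanned by the forests $F\ne\di$ with $\deg(F)\le n+1$, and since $\di\notin\ker\eb$ this yields $H^n=\bfk\di\oplus(H^n\cap\ker\eb)=\im u\oplus(H^n\cap\ker\eb)$. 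Connectedness is the identity $H^0=\bfk\di=\im u$. Thus the entire content of the proposition is axiom~(c): $\sj(H^n)\subseteq\sum_{p+q=n}H^p\ot H^q$.

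For axiom~(c) I would put $\omega(F):=\deg(F)-1$, so that $H^n=\mathrm{span}\{F\mid\omega(F)\le n\}$, and prove the sharper statement that for every forest $F$ every basis pair $(A,B)$ occurring in $\sj(F)$ satisfies $\omega(A)+\omega(B)\le\omega(F)$; axiom~(c) then follows because $A\ot B\in H^{\omega(A)}\ot H^{\omega(B)}$ can be absorbed into $\sum_{p+q=n}H^p\ot H^q$ for any $n\ge\omega(F)$ by enlarging one filtration index. I would establish the bound by induction on $\deg(F)$, using Theorem~\mref{thm:acopr}. If $F=\di$, then $\sj(\di)=\di\ot\di$; if $F=\di x\di$, then $\sj(F)=\di x\di\ot\di+\di\ot\di x\di$; if $F=B^+(\ovf F)$ is a tree of positive depth, the cocycle identity $\sj(F)=F\ot\di+(\id\ot B^+)(\sj(\ovf F))$, the relation $\omega\circ B^+=\omega+1$, the equality $\omega(\ovf F)=\omega(F)-1$ and the inductive hypothesis for $\ovf F$ give the bound; and if $\ell(F)\ge 2$ and $\deg(F)\ge 3$, one factors $F=A\dia B$ with $A,B\in\calf_X^a$ both of degree strictly smaller than $\deg(F)$ and with $\omega(A)+\omega(B)=\omega(F)$ — concretely $A=T_1$ and $B=\di\,x_1\,T_2\,x_2\cdots x_{k-1}\,T_k$ when $T_1\ne\di$, and $A=\di\,x_1\,\di$ and $B=T_2\,x_2\cdots x_{k-1}\,T_k$ when $T_1=\di$ — and then invokes multiplicativity $\sj(F)=\sj(A)\dia\sj(B)$, the inductive hypothesis for $A$ and $B$, and the sub-additivity $\omega(P\dia Q)\le\omega(P)+\omega(Q)$. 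These four cases are exhaustive, so the induction closes.

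The step I expect to be the main obstacle is the forest case: one must check that every forest of length $\ge 2$ (other than $\di x\di$) really factors through $\dia$ into honest forests of strictly smaller degree whose $\omega$-values still total $\omega(F)$, and one must be sure that the weight-$\lambda$ corrections produced by $\dia$ — both inside the cocycle step and inside $\sj(A)\dia\sj(B)$ — can only decrease degrees, never increase them. Both points reduce to the elementary facts that $\deg\circ B^+=\deg+1$ exactly while $\deg(P\dia Q)\le\deg(P)+\deg(Q)-1$ with the defect carried by strictly lower-degree terms, which are read off from the recursion~\eqref{eq:dia1}--\eqref{eq:prod2}; so the case analysis, though a little lengthy, meets no genuine difficulty. (In fact $\omega(A)+\omega(B)=\omega(F)$ for every pair in $\sj(F)$, i.e.\ $\bfk\calf_X^a$ is even cograded by $\omega$, but only the inequality is needed for the proposition.) As an alternative I could argue combinatorially straight from Eq.~\eqref{eq:acopr3}: forming $\cl(G)$ inserts a controlled number of $\di$'s and $\iu$'s, forming $F/G$ inserts further $\iu$'s, and executing each $\iu$ as $\dia$ drops the vertex count by exactly one, so these insertions and contractions balance to give $\deg(\cl(G))+\deg(F/G)\le\deg(F)+1$ for every $G\pq F$; I would nevertheless prefer the inductive proof, since it reuses Theorem~\mref{thm:acopr} and avoids re-tracing the closure and quotient bookkeeping.
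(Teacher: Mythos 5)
Your proposal is correct and follows essentially the same route as the paper: the identical cofiltration $H^n=\bfk\{F:\deg(F)-1\le n\}$, with (a), (b), (d) handled as formalities and (c) proved by induction using the cocycle identity $\sj(B^+(\ovf F))=B^+(\ovf F)\ot\di+(\id\ot B^+)\sj(\ovf F)$ for trees and multiplicativity $\sj(F_1\dia F_2)=\sj(F_1)\dia\sj(F_2)$ together with $\deg(F_1\dia F_2)=\deg(F_1)+\deg(F_2)-1$ for longer forests. Your added care in observing that the weight-$\lambda$ terms of $\dia$ only lower degrees (so that the additivity of $\omega$ under $\dia$ is really an inequality absorbed by the nesting $H^p\subseteq H^{p+1}$) is a small refinement the paper passes over, but it does not change the argument.
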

\begin{proof}
 First we define
$$\fraka^n:=\{F\in \calf_X^a|\deg(F)-1\le n\} \text{ for } n\ge0.$$
  And we denote $H^n:=\bfk\fraka^n.$ Then we have $H^0= \bfk=\im u$ and $\bfk\calf_X^a=\cup_{n\ge0}^\infty H^n$, so $(b) $ is clear.

(a) Obviously, for $F\in H^n,\text{ }\deg(F)\le n+1,$
$$H^{n+1}:=\bfk\fraka^{n+1}:=\bfk\{F\in \calf_X^a| \deg(F)-1\le n+1\}.$$
So, $F\in H^{n+1},$ that is, $H^n\subseteq H^{n+1}.$

(c) When $n=0,$ $F=\bullet\in H^0,$
$$\sj(F)=\sj(\bullet)=\bullet\ot \bullet\subseteq \sum_{0+0=0} H^0\ot H^0.$$
Assume that for $n=k\geq 0,\text{ }\sj(H^k)\subseteq \sum_{p+q=k} H^p\ot H^q$. Then we consider the case of $n=k+1$. Let $F\in H^n$. We consider two cases.

{\bf Case 1.} If $\bre(F)=1$, then we have $F=B^+(\ovf{F})\text{ and }\deg(\ovf{F})=k+1.$ Also we have
$$\sj(F)=\sj(B^+(\ovf{F}))=F\ot \bullet+(\id\ot B^+ )\circ \sj(\ovf{F}),$$
so we have $(\id\ot B^+ )\circ \sj(\ovf{F})\in \sum_{p+q+1=k+1}H^p\ot H^{q+1}$ and $F\ot \bullet\in H^{k+1}\ot H^0$. Then
 $$\sj(F)=\sj(H^{k+1})\subseteq \sum_{p+q+1=k+1}H^p\ot H^q. $$

{\bf Case 2.} If $\bre(F)\ge 2,$ we have $F=F_1\dia F_2,\text{ and }\deg(F)=\deg(F_1)+\deg(F_2)-1=k+2$, where we denote
$$\sj(F_1)\in \sum_{p_1+ q_1=\deg(F_1)-1}H^{p_1}\ot H^{q_1}, \quad \sj(F_2)\in\sum_{p_2+ q_2=\deg(F_2)-1}H^{p_2}\ot H^{q_2}.$$
\begin{eqnarray*}
&\sj(F)&=\sj(F_1)\dia \sj(F_2)\\
&&\in \left(\sum_{p_1+ q_1=\deg(F_1)-1}H^{p_1}\ot H^{q_1}\right)\dia \left(\sum_{p_2+ q_2=\deg(F_2)-1}H^{p_2}\ot H^{q_2}\right)\\
&&=\sum_{p_1+ q_1=\deg(F_1)-1}\sum_{p_2+ q_2=\deg(F_2)-1}(H^{p_1}\dia H^{p_2})\ot (H^{q_1}\dia H^{q_2})\\
&&\subseteq \sum_{p_1+ q_1=\deg(F_1)-1}\sum_{p_2+ q_2=\deg(F_2)-1}H^{p_1+p_2}\ot H^{q_1+q_2}\\
&&\subseteq \sum_{p+q=k+1}H^p\ot H^q.
\end{eqnarray*}
This completes the induction.
\smallskip

\noindent
(d) Since $\eb \circ u=\id_\bfk$, $u\circ \eb$ is idempotent. Further $u$ is injective and $\eb$ is surjective. Thus
$$H^n = \im u\circ \eb|_{H^n} \oplus \ker u\circ \eb|_{H^n} = \im u\oplus (H^n \cap\ker\eb).$$

In summary, we have proved that $(\bfk \calf_X^a,\sj,\eb)$ is a connected coaugmented cofiltered coalgebra.
\end{proof}

\begin{lemma} \mlabel{lem:conn}
~\mcite{GG} Let $(H,m,u,\sj,\varepsilon)$ be a bialgebra such that $ (H,\sj,\varepsilon,u)$ is a connected coaugmented cofiltered coalgebra is a Hopf algebra and the antipode $S$ is given by \\
$$S(1_H)=1_H\text{ and } S(x)=-x+\sum_{n\geqslant 1}(-1)^{n+1}m^n\bar{\Delta}^n(x) \text{ for }x\in \ker\varepsilon,$$
where $\bar{\Delta}(x):=\Delta(x)-1_H\ot x- x\ot 1_H\in \ker \varepsilon \otimes \ker \varepsilon.$ 
\end{lemma}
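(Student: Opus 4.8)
The plan is to realize the antipode as the $\star$-inverse of $\id_H$ in the convolution algebra $(\Hom(H,H),\star)$, where $f\star g:=m\circ(f\ot g)\circ\Delta$ has unit $u\circ\varepsilon$; recall that a bialgebra is a Hopf algebra precisely when $\id_H$ admits a two-sided $\star$-inverse. Put $\pi:=\id_H-u\circ\varepsilon$. Since $\varepsilon\circ u=\id_\bfk$, the map $u\circ\varepsilon$ is an idempotent and $H=\im u\oplus\ker\varepsilon$ with $\pi$ the projection onto $\ker\varepsilon$; in particular $\pi$ vanishes on $\im u$, hence, by the connectedness hypothesis $H^0=\im u$, on all of $H^0$.

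First I would prove that $\pi$ is locally $\star$-nilpotent. For $k\ge1$ one has $\pi^{\star k}=m^{k-1}\circ\pi^{\ot k}\circ\Delta^{k-1}$, where $\Delta^{k-1}\colon H\to H^{\ot k}$ is the iterated coproduct and $m^{k-1}\colon H^{\ot k}\to H$ the iterated product. Iterating the cofiltration condition (c), for $x\in H^n$ one gets $\Delta^{k-1}(x)\in\sum_{p_1+\cdots+p_k=n}H^{p_1}\ot\cdots\ot H^{p_k}$; since $\pi$ kills $H^0$, applying $\pi^{\ot k}$ annihilates every summand in which some $p_i=0$, and the surviving summands have all $p_i\ge1$, forcing $n=\sum p_i\ge k$. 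Hence $\pi^{\star k}(x)=0$ whenever $k>n$, so $S:=\sum_{k\ge0}(-1)^k\pi^{\star k}$ is a well-defined linear endomorphism of $H$, being a finite sum on each $H^n$. A telescoping identity, legitimate because all sums are finite on each $H^n$, then yields
$$S\star\id_H=\Bigl(\sum_{k\ge0}(-1)^k\pi^{\star k}\Bigr)\star(u\varepsilon+\pi)=\sum_{k\ge0}(-1)^k\pi^{\star k}+\sum_{k\ge0}(-1)^k\pi^{\star(k+1)}=\pi^{\star0}=u\varepsilon,$$
and symmetrically $\id_H\star S=u\varepsilon$. This exhibits $S$ as the antipode, so $H$ is a Hopf algebra; that $S$ is automatically an algebra and coalgebra anti-homomorphism then follows formally from the Hopf algebra axioms.

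To recover the stated closed form, I would evaluate $S$ on the two summands of $H=\im u\oplus\ker\varepsilon$. On $1_H\in H^0$ only the $k=0$ term survives, giving $S(1_H)=u\varepsilon(1_H)=1_H$. For $x\in\ker\varepsilon$, a routine induction on $k$ (using $\Delta(x)=1_H\ot x+x\ot1_H+\bar\Delta(x)$, $\pi(1_H)=0$, $\bar\Delta(x)\in\ker\varepsilon\ot\ker\varepsilon$, and coassociativity) shows $\pi^{\ot k}\circ\Delta^{k-1}(x)=\bar\Delta^{\,k-1}(x)$, hence $\pi^{\star k}(x)=m^{k-1}\bar\Delta^{\,k-1}(x)$; since $\pi^{\star0}(x)=u\varepsilon(x)=0$, re-indexing gives
$$S(x)=\sum_{k\ge1}(-1)^k m^{k-1}\bar\Delta^{\,k-1}(x)=-x+\sum_{n\ge1}(-1)^{n+1}m^{\,n}\bar\Delta^{\,n}(x),$$
as claimed.

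The step I expect to be the main obstacle is the local $\star$-nilpotency of $\pi$: showing that the connectedness normalization $H^0=\im u$ (which is what guarantees $\pi|_{H^0}=0$), together with the cofiltration compatibility $\Delta(H^n)\subseteq\sum_{p+q=n}H^p\ot H^q$, forces $\pi^{\star k}$ to vanish on $H^n$ as soon as $k>n$. This is precisely where the hypotheses are genuinely used; granting it, the rest is the standard geometric-series inversion in the convolution algebra plus the bookkeeping identifying $\pi^{\star(k+1)}$ on $\ker\varepsilon$ with $m^{k}\bar\Delta^{\,k}$.
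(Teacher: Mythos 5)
Your proof is correct, and it is the standard convolution-algebra argument: the paper itself gives no proof of this lemma, citing [GG], and the argument there is precisely the one you give (local $\star$-nilpotency of $\pi=\id_H-u\circ\varepsilon$ from connectedness plus the cofiltration, geometric-series inversion of $\id_H=u\varepsilon+\pi$, and the identification $\pi^{\star(k+1)}=m^k\bar{\Delta}^k$ on $\ker\varepsilon$). Nothing to add.
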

Here are examples of the antipodes for some angularly decorated forests.
$$ S(\xtd31)=-\xtd31 +\ta1\, x\, \tb2,$$
$$ S(\xtd31 y\, \ta1)=\ta1\, y \xtd31 -\ta1\, y\, \ta1\, x\, \tb2.$$
 
Combining Proposition~\mref{pp:conn} and Lemma~\mref{lem:conn}, we obtain
  \begin{theorem}
 $(\bfk \calf_X^a,\dia, u, \sj,\eb,S)$ is a Hopf algebra.
\end{theorem}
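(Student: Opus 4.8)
The plan is to obtain the Hopf algebra structure as an immediate consequence of three results already in place: the bialgebra axioms from Theorem~\mref{thm:abial}, the structural refinements of the coalgebra from Proposition~\mref{pp:conn}, and the abstract antipode construction recorded in Lemma~\mref{lem:conn}. Nothing new has to be computed here; the task is to check that the hypotheses of Lemma~\mref{lem:conn} are met verbatim and then read off its conclusion.

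Concretely, I would first assemble the data. The multiplication $\dia$ (written $m$) and the unit $u$ come from Theorem~\mref{thm:rba}, while the coproduct $\sj$ and the counit $\eb$ were constructed in Section~\mref{sec:bialg}; by Theorem~\mref{thm:abial} the quintuple $(\bfk\calf_X^a, m, u, \sj, \eb)$ is a bialgebra. Next I would invoke Proposition~\mref{pp:conn}, which equips the coalgebra with the filtration $H^n=\bfk\fraka^n$, where $\fraka^n=\{F\in\calf_X^a\mid \deg(F)-1\le n\}$, and shows that $(\bfk\calf_X^a,\sj,\eb,u)$ is connected, coaugmented and cofiltered: $H^n\subseteq H^{n+1}$, $\bfk\calf_X^a=\bigcup_{n\ge 0}H^n$, $\sj(H^n)\subseteq\sum_{p+q=n}H^p\ot H^q$, $H^n=\im u\oplus(H^n\cap\ker\eb)$, and $H^0=\im u$. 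These are exactly the hypotheses of Lemma~\mref{lem:conn}, so the lemma applies and produces the antipode $S$ determined by $S(\di)=\di$ and $S(x)=-x+\sum_{n\ge 1}(-1)^{n+1}m^n\bar{\sj}^{\,n}(x)$ for $x\in\ker\eb$, where $\bar{\sj}(x)=\sj(x)-\di\ot x-x\ot\di$; hence $(\bfk\calf_X^a,\dia,u,\sj,\eb,S)$ is a Hopf algebra.

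Since every hypothesis has already been verified, there is no genuine obstacle at the level of the final theorem itself; the one condition carrying real content — and where any difficulty would reside — is the cofiltration inclusion $\sj(H^n)\subseteq\sum_{p+q=n}H^p\ot H^q$ together with its stability under the Rota-Baxter product, but this is precisely what Proposition~\mref{pp:conn} establishes, using the multiplicativity of $\sj$ from Theorem~\mref{thm:acopr} and the identity $\deg(F_1\dia F_2)=\deg(F_1)+\deg(F_2)-1$ (the degree-minus-one indexing of the filtration is forced by exactly this identity). The remaining points are bookkeeping: matching the notation of Lemma~\mref{lem:conn} to ours ($\Delta\leftrightarrow\sj$, $\varepsilon\leftrightarrow\eb$), and noting that connectedness $H^0=\im u$ gives $H^0\cap\ker\eb=0$, so under $\bar{\sj}$ every nontrivial tensor factor has filtration degree at least one and therefore $\bar{\sj}^{\,n}(x)=0$ once $n>\deg(x)-1$, which makes the defining sum for $S(x)$ finite. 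As a final consistency check I would compare the formula against the explicit values $S(\xtd31)$ and $S(\xtd31\, y\, \ta1)$ displayed above.
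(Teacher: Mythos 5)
Your proposal is correct and follows exactly the paper's own route: the paper likewise obtains the Hopf algebra structure by combining Theorem~\mref{thm:abial} (bialgebra), Proposition~\mref{pp:conn} (connected coaugmented cofiltered coalgebra), and Lemma~\mref{lem:conn} (antipode for connected cofiltered bialgebras). Your additional remarks on the finiteness of the antipode sum and the consistency check against the displayed examples are sensible elaborations but not a different argument.
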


\smallskip

\noindent {\bf Acknowledgments}: 
This work was supported by the National Natural Science Foundation of China (Grant No.\@ 11771190).
The authors thank the referees for helpful suggestions. 

\smallskip

\noindent {\bf Data availability statement}: 
The data that support the findings of this study can be found in journal publications and the arxiv.

\end{document}